\newcolumntype{P}[1]{>{\centering\arraybackslash}p{#1}}
\newcolumntype{+}{>{\global \let \currentrowstyle \relax}}
\newcolumntype{^}{>{\currentrowstyle }}
\newtheorem{thm}{Theorem}[section]                     
\newtheorem{lem}[thm]{Lemma}                                  
\newtheorem{prop}[thm]{Proposition}
\newenvironment{proof}{%
{\noindent \bf Proof. }%
}{%
\hfill$\Box$\\%
}
\def\N{\mathbb{N}}
\begin{document}

\begin{frontmatter}



\title{Influence of bovines and rodents in the spread of schistosomiasis across the ricefield-lakescape of Lake Mainit, Philippines: An Optimal Control Study
}

\author[csumath]{J. Arcede\corref{cor1}}
\author[lamfa,nuol]{B. Doungsavanh}
\author[iitbio]{L.A. Esta\~no}
\author[csubio]{J.C. Jumawan}
\author[csubio]{J.H. Jumawan}
\author[icj]{Y. Mammeri}

\address[csumath]{Caraga State University, Department of Mathematics,  Butuan City, Philippines}
\address[lamfa]{Universit\'e de Picardie Jules Verne, LAMFA CNRS UMR 7352, 80069 Amiens, France}
\address[nuol]{National University of Laos, Department of Mathematics and statistics, Vientiane, Laos}
\address[iitbio]{Department of Biological Sciences, Mindanao State University-Iligan Institute of Technology }
\address[csubio]{Caraga State University, Department of Biology,  Butuan City, Philippines}
\address[icj]{Universit\'e Jean Monnet, CNRS, Ecole Centrale de Lyon, INSA Lyon, Universite Claude Bernard Lyon 1, Institut Camille Jordan UMR5208, 42023 Saint-Etienne, France}

\cortext[cor1]{Corresponding author: jparcede@carsu.edu.ph}

\begin{abstract}
Schistosomiasis remains a persistent challenge in tropical freshwater ecosystems, necessitating the development of refined control strategies. Bovines, especially water buffaloes, are commonly used in traditional farming practices across rural areas of the Philippines. Bovines, however, are the biggest reservoir hosts for schistosome eggs, which contribute to the active transmission cycle of schistosomiasis in rice fields. We propose a mathematical model to analyze schistosomiasis dynamics in rice fields near the Lake Mainit in the Philippines, an area known for endemic transmission of schistosomiasis, focusing on human, bovine, and snail populations. Rodents, although considered, were not directly included in the control strategies. Grounded in field data, the model, built on a system of nonlinear ordinary differential equations, enabled us to derive the basic reproduction number and assess various intervention strategies. The simulation of optimal control scenarios, incorporating chemotherapy, mollusciciding, and mechanical methods, provides a comparative analysis of their efficacies. The results indicated that the integrated control strategies markedly reduced the prevalence of schistosomiasis. This study provides insights into optimal control strategies that are vital for policymakers to design effective, sustainable schistosomiasis control programs, underscored by the necessity to include bovine populations in treatment regimens.
\end{abstract}

\begin{keyword}
Ricefield-lakescape \sep $\mathcal{R}_0$ \sep Optimal control  \sep Schistosomiasis  

\MSC[2010]  34H05 \sep 34D05 \sep 92D30
\end{keyword}

\end{frontmatter}


\section{Introduction}

Lake Mainit is a large natural freshwater lake located in the northeastern part of the island of Mindanao in the Philippines. It is the fourth-largest lake in the country, covering an area of approximately 173.48 square kilometers.
The lake is situated in the provinces of Agusan del Norte and Surigao del Norte, surrounded by lush forests and mountain ranges. It is an important water source for irrigation, fisheries, and other livelihood activities for the local communities living around the lake. It is also home to diverse flora and fauna, including several endemic and endangered species. However, like many freshwater bodies, it is also threatened by pollution, deforestation, and other human activities that can negatively impact its ecological health and the communities that rely on it \citep{tumanda2003limnological,baluyut1983stocking}. 

One significant threat to Lake Mainit and its people is the incidence of schistosomiasis, which is endemic in the area since 1947 \citep{pesigan1947results}. The snail species responsible for transmitting the parasite to humans, called \textit{Oncomelania hupensis quadrasi}, has been found and the lake and its adjacent ricefields has been identified as a major transmission site for the disease, particularly in the municipalities of Jabonga, Mainit, Alegria, Tubay, and Sison in Agusan del Norte province \citep{abao2019schistosoma,estano2023prevailing,jumawan_prevalence_2021,jadap2015geospatial}. During the rainy season, the flooded interface between the lake and rice fields may promote zoonotic disease transmission via bovine fecal matter and snails hosting various parasitic species \citep{Jumawanzoonotic}. Surveys have linked snails and bovines in disease transmission within rice fields and other bovine-related parasitic diseases \citep{estano2023prevailing}. Farmers and lakeshore residents face occupational risks of schistosomiasis due to exposure in water bodies like irrigated canals, rice paddies, swamps, and residential areas, where snails and bovines thrive \citep{abao2019schistosoma, jumawan2021prevalence}. 
A preliminary study conducted on brown rats in the ricefields of five barangays near Lake Mainit reveals that the rodents carry hookworms, tapeworms, and ectoparasites, but there is no indication of \textit{Schistosoma} infection \citep{abao2021zoonotic}. Despite the study's limited number of sites and sampling constraints, further extensive data collection is necessary to confirm these initial results. However, if the opposite were true, it could exacerbate the situation significantly.  Notably, there were no reported surveys for schistosomiasis-infected goats, dogs, or swine nearby.

According to WHO, schistosomiasis is the most prevalent water-borne parasitic disease affecting millions worldwide, particularly in areas with poor sanitation and limited access to safe water sources \citep{who2013schistosomiasis}. The epidemiological statistics associated with schistosomiasis are impressive: 800 million people are at risk in 78 countries, mostly concentrated in sub-Saharan Africa; 230 million are infected, and the disease is responsible for between 1.7 and 4.5 million disability-adjusted life years \citep{boissier2019schistosoma}.

Blood flukes (trematode worms) of the genus \textit{Schistosoma} cause schistosomiasis \citep{ross2017new}.  Schistosomes have a complicated life cycle. However, in this paper, we follow the following life cycle of \textit{Schistosoma} as revealed by \cite{nelwan2019schistosomiasis} and \cite{zhou2016multi}.
The parasitic worm involves two hosts: a definitive host (humans, carabao, other mammals) and an intermediate host snail (\textit{Oncomelania hupensis quadrasi}). The life cycle can be divided into several stages:

\begin{enumerate}
    \item Infection of mammalian host and sexual reproduction: Cercariae, the free-swimming larval forms of \textit{Schistosoma} enter the mammalian skin and shed their forked tail, forming a schistosomula.  Sexual reproduction occurs in the definitive host --mammals (humans, bovines, rodents, dogs). The schistosomula migrate throughout the mammal’s tissues through blood circulation. Schistosomula grow into mature schistosomes, mate, and produce eggs.
    \item Release of eggs into the environment: The eggs are then passed out of the mammalian host’s body and into the environment through feces or urine. In freshwater, these eggs form miracidia, which hatch and infect intermediate host snails. 
    \item Asexual reproduction: Occurs in the intermediate host: \textit{Schistosoma japonicum} infects snails of the genus \textit{Oncomelania},  \textit{S. mekongi} infects snails of the genus \textit{Neutricula};  \textit{S. mansoni} infects snails of the genus \textit{Biomphalaria}. After infiltration of the snail host, the miracidium removes the ciliated plates, develops into a mother sporocyst, and then produces daughter sporocysts. Daughter sporocysts produce either cercaria (cercariogenous sporocysts) or more daughter sporocysts (sporocystogenous sporocysts). The cercaria that emerges from snails looks for a suitable mammalian host to penetrate, starting the cycle over again.
\end{enumerate}

In many parts of the country, rice farming remains largely unmechanized, relying instead on water buffaloes (carabaos), which are recognized as major reservoir hosts for schistosomiasis \citep{olveda2019schistosomiasis}. Rice farming in areas endemic for schistosomiasis poses occupational risk as snail intermediate hosts thrive in this nidus, effectively perpetuating the life cycle of \textit{Schistosoma} \citep{jumawan2021prevalence}. Mass drug administration (MDA) of praziquantel (PZQ) in humans is the primary method of treating schistosomiasis in the country \citep{olveda2019schistosomiasis}. However, PZQ only targets mature worms and does not provide immunity \citep{WHO2020schistosomiasis}.

Recent reports have highlighted the effectiveness of combining interventions such as human mass chemotherapy, mollusciciding, and bovine vaccination in endemic villages \citep{ross2023first}. Such comprehensive measures are critical in ecosystems characterized by double-cropping rice irrigation schemes, correlated with a higher prevalence of \textit{Schistosoma haematobium }\citep{coulibaly2004comparison}. Notably, in Japan, snail habitat modification through the construction of concrete irrigation channels has proven effective in disrupting the parasite's lifecycle by preventing snail proliferation \citep{bay2022total}. Furthermore, mechanized farming has emerged as a promising intervention, effectively reducing the population of the largest reservoir host for the disease \citep{gordon2019asian}  in many Asian countries. However, while the success of these interventions in typical agricultural settings is documented, strategies tailored to unique lake-ricefield environments, such as Lake Mainit, are not commonly reported in the literature

A systematic review by \cite{LOWE2023106873} highlighted several studies focusing on host heterogeneity and the interplay between humans, snails, and bovines, most of which are concentrated on scenarios in China. Another one by \cite{chiwendu2022} incorporates heterogeneous host transmission pathways, particularly the roles of bovines, rodents, and environmental factors. However, the paper emphasizes reducing the snail population as a control strategy, which might not be feasible in all endemic areas due to ecological and economic constraints. Furthermore, \cite{Lutz2023} focuses on a comprehensive mathematical model that evaluates integrated control strategies considering various environmental factors, such as temperature and chemical degradation, without specific emphasis on the role of bovines and rodents. All the papers mentioned are not set in a ricefield-lakescape environment, and most do not specify a particular geographic focus. Therefore, a field study is needed to validate the model's predictions and refine the parameters used, ensuring the model's applicability to real-world scenarios.

Our primary objective in this paper is to elucidate the dynamics of schistosomiasis within ricefields adjacent to Lake Mainit by applying a system of ordinary differential equations underpinned by field data, we not only derive the basic reproduction number, $\mathcal{R}_0$, but also propose optimized control interventions grounded in control theory principles. 
To achieve this goal, we employed mathematical modeling and a conceptual approach to comprehensively analyze schistosomiasis transmission in the Lake Mainit region of the Philippines. The design of our model was informed by on-site data collected during parasitological surveys and snail diversity research conducted between 2020 and 2023 \citep{estano2023prevailing, jumawan_prevalence_2021,  Jumawanzoonotic}. Following parameter estimation, we derived the basic reproduction number, $\mathcal{R}_0$, while considering the uncertainty associated with the model's parameters.

Given that \textit{S. japonicum} is a zoonotic parasite, its categorization as a spillover pathogen, multi-host pathogen, or true multi-host pathogen varies depending on the location and scale of the observed multi-host parasite community, such as within ricefields. We elucidate the most significant host species in each region by applying our proposed conceptual framework, identifying maintenance and essential hosts for transmission as defined in the Materials and Methods section. Additionally, we investigate the potential effects of interventions targeting various zoonotic reservoirs to control and eradicate \textit{S. japonicum}  and \textit{O. quadrasi} in the lake. Our research illustrates how mathematical models and conceptual frameworks can be applied to empirical data to characterize systems and uncertainties, and how they are pertinent for controlling schistosomiasis in Lake Mainit, Philippines.

The paper is structured as follows: Section 2 outlines our methodology, explaining the model and its qualitative analysis. This section also provides the closed-form equation for the reproductive number and discusses optimal strategies for reducing transmission. Section 3 presents numerical experiments comparing optimal strategies in scenarios with and without the presence of uncontrolled rodents. Finally, Section 4 briefly discusses measures to limit the outbreak, and we conclude with Section 5.

\section{Material and methods}

\subsection{Description of the mathematical model}

Our model takes into account the life cycle of schistosomiasis that involves complex stages as described by \cite{gordon2019asian}.
The life cycle begins when eggs are passed in the feces or urine of infected mammals who have come into contact with contaminated water bodies. The eggs hatch in water, releasing larvae called miracidia.
Once released into freshwater, miracidia actively seek out specific freshwater snails, which serve as intermediate hosts. They penetrate the tissues of these snails and develop into sporocysts.
Sporocysts mature and produce cercariae, which are released from the snails into the water.
Cercariae are free-swimming larvae that actively penetrate the intact skin of mammals who come into contact with contaminated water. Once inside the human body, cercariae transform into schistosomulae.
Schistosomulae migrate and mature into adult worms. 
Male and female adult worms pair up and produce eggs. The eggs traverse the tissues of the intestines or urinary tract and are released into the environment through feces or urine, perpetuating the cycle.

\begin{figure}[htbp]
\centering
\includegraphics[scale=0.4]{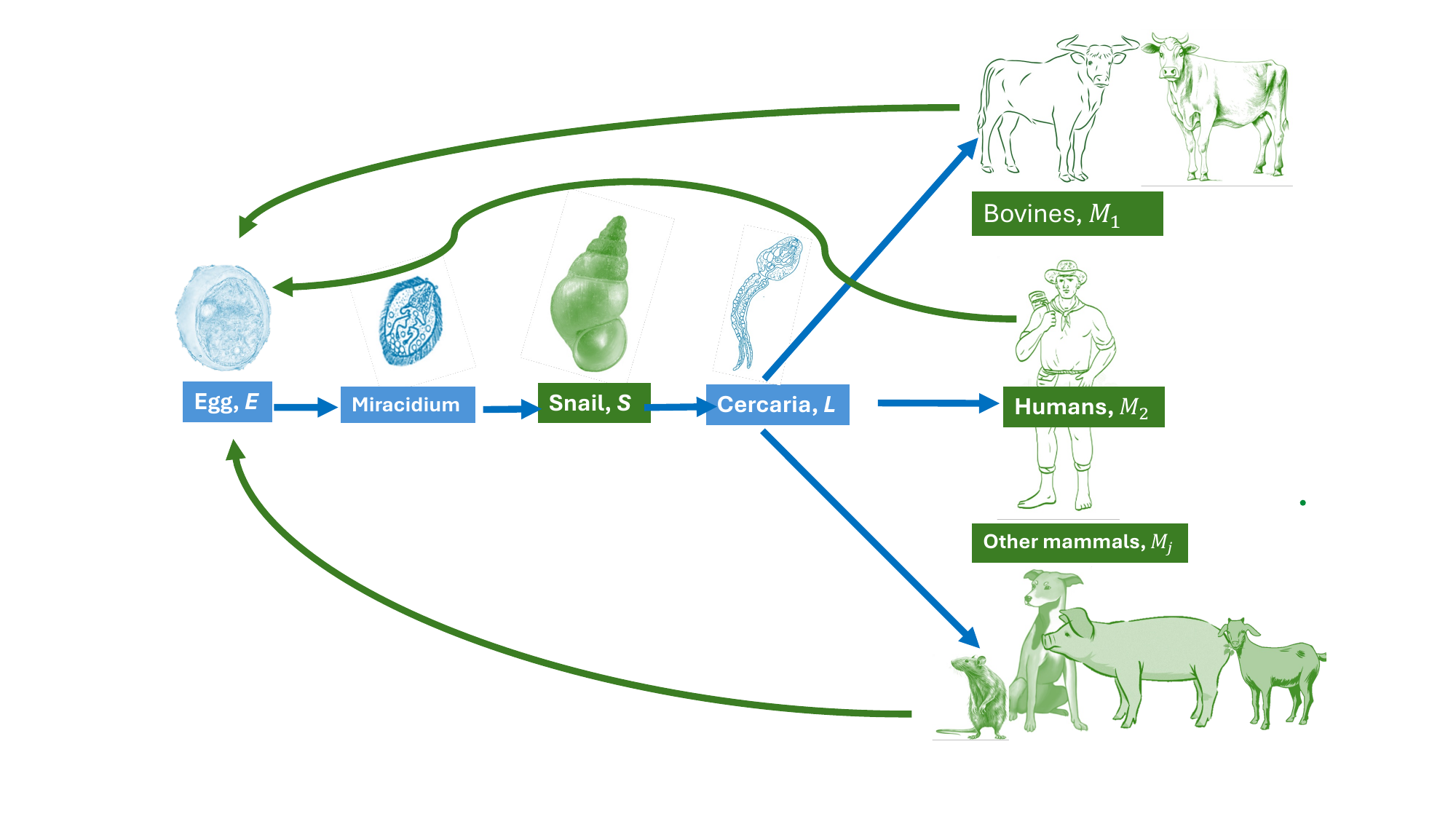} 
\\[-0.5cm]\caption{Compartmental representation of the dynamics of Schistosomiasis life cycle in snails and mammals.
The schistosomiasis life cycle starts with infected mammals shedding eggs in water. Miracidia larvae hatch from these eggs and target freshwater snails. 
Within snails, miracidia produce cercariae. Free swimming-cercariae penetrate the skin of mammals in water.}
\label{figflow}
\end{figure}

\newpage

We consider a compartmental model made with the above states: let $E$ be the number eggs of schistosomiasis, 
$L$ be the number larvae of schistosomiasis or cercariae,  $S_s$ and $S_i$ be the number susceptible and infected snails, 
for $1\leq j \leq m$, $M_{j,s}$ and $M_{j,i}$ be the number susceptible and infected mammals. 

\begin{equation}
  \label{e1}
  \left\{
      \begin{aligned}
        E'(t) & = \sum_{j=1}^{m} \theta_j M_{j,i}(t) - \omega_s S_s(t) E(t) - \mu_e E(t)
        \\
        S_s'(t) & =  -\omega_s \beta_s S_s(t) E(t) + \alpha_s (S_s(t) + S_i(t)) \left( 1 - \frac{S_s(t) + S_i(t)}{\kappa_s}\right) - \mu_s S_s(t)
        \\
        S_i'(t) & =  \omega_s \beta_s S_s(t) E(t) - (\mu_s + \gamma_s) S_i(t)
        \\
        L'(t) &= \nu_s S_i(t) - \sum_{j=1}^{m} \omega_j M_{j,s}(t) L(t) - \mu_l L(t)
        \\
        M_{j,s}'(t) & =  -\omega_j \beta_j L(t) M_{j,s}(t) + \alpha_j - \mu_j M_{j,s}(t), \hskip 1cm 1 \leq j \leq m
        \\
        M_{j,i}'(t) & =  \omega_j \beta_j L(t) M_{j,s}(t) - (\mu_j + \gamma_j) M_{j,i}(t), \hskip 1.1cm 1 \leq j \leq m   
      \end{aligned}
    \right.
\end{equation}
Note that the total population of snails is $S= S_s+S_i$ and the total population of mammals is $M_j = M_{j,s}+M_{j,i}$.
Parameters are described in the table \ref{tabparam}.

\subsection{Qualitative study}
It is standard to check that there exists a unique global in time solution
where the population are nonnegative and bounded.
The system of ordinary differential equations is rewritten as
$ U^\prime(t)= F(U(t), t)$ with $$U(t)= (E, S_s, S_i, L, M_{1,s}, M_{1,i}, \cdots, M_{m,s}, M_{m,i})^T$$ and 
\begin{align*}
    F(U(t), t)= 
    \begin{pmatrix}
    \sum_{j=1}^{m} \theta_j M_{j,i} - \omega_s S_s E - \mu_e E\\
     -\omega_s \beta_s S_s E + \alpha_s (S_s + S_i) \left( 1 - \frac{S_s + S_i}{\kappa_s}\right)- \mu_s S_s\\
     \omega_s \beta_s S_s E - (\mu_s + \gamma_s) S_i\\
     \nu_s S_i - \sum_{j=1}^{m} \omega_j M_{j,s} L - \mu_l L\\
     -\omega_1 \beta_1 L M_{1,s} + \alpha_1 - \mu_1 M_{1,s}\\
     \omega_1 \beta_1 L M_{1,s} - (\mu_1 + \gamma_1) M_{1,i}\\
     \vdots\\
      -\omega_m \beta_m L M_{m,s} + \alpha_1 - \mu_j M_{m,s}\\
      \omega_m \beta_m L M_{m,s} - (\mu_m + \gamma_) M_{m,i}
    \end{pmatrix}
\end{align*} 
\begin{thm}\label{thm1}
The domain $\Omega$ defined by 
\begin{align*}
    \Omega :=  \Bigg\{ & (E, S_s, S_i, L, M_{1,s}, M_{1,i}, \cdots, M_{m,s}, M_{m,i}) \in \mathbb{R}^{2m+4}; \\ & 0\le E\le \max \left(\dfrac{\sum_{j=1}^{m} \theta_j \max \left(\dfrac{\alpha_i}{\mu_j}, M_0\right)}{\mu_e}, E_0 \right), \,\, 0 \le L\le\max \left(\dfrac{\nu_s \max(\kappa_s (\alpha_s-\mu_s),S_0)}{\mu_l}, L_0\right), \\
    &0 \le S_s+S_i \le \max\left(\kappa_s (\alpha_s-\mu_s), S_0\right), \,\, 0 \le M_{j,s}+M_{j,i} \le \max \left(\dfrac{\alpha_j}{\mu_j}, M_0\right),
    \,\, 1 \le j \le m \Bigg\}
\end{align*}
is positively invariant. In particular, for any initial datum in $\Omega$, there exists a unique global in time solution in $\mathcal{C} \left(\mathbb{R}_{+}, \Omega\right)$.
\end{thm}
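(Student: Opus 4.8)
The plan is to combine the classical Cauchy--Lipschitz theory with a priori bounds that exploit the cascade structure of~\eqref{e1}. \textbf{Step 1: local well-posedness.} The map $F$ is polynomial of degree two in $U$, hence $\mathcal{C}^\infty$ and in particular locally Lipschitz on $\mathbb{R}^{2m+4}$; by the Picard--Lindel\"of theorem every initial datum $U(0)\in\Omega$ generates a unique maximal solution $U\in\mathcal{C}^1([0,T_{\max}),\mathbb{R}^{2m+4})$, together with the blow-up alternative: if $T_{\max}<+\infty$ then $\|U(t)\|\to+\infty$ as $t\to T_{\max}^-$.

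\textbf{Step 2: nonnegativity.} I would show that the closed positive orthant $\mathbb{R}_+^{2m+4}$ is invariant by checking the sub-tangentiality (quasi-positivity) condition: whenever one coordinate vanishes and all the others are $\ge 0$, the corresponding component of $F$ is $\ge 0$. This is immediate for $E$ (source $\sum_j\theta_j M_{j,i}$), for $S_i$ (source $\omega_s\beta_s S_s E$), for $L$ (source $\nu_s S_i$), and for each $M_{j,s}$ (source $\alpha_j>0$) and $M_{j,i}$. The one delicate component is $S_s$: on $\{S_s=0\}$ one gets $S_s'=\alpha_s S_i(1-S_i/\kappa_s)$, which is $\ge 0$ only while $S_i\le\kappa_s$. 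I would close this gap by first establishing that the total snail population $S:=S_s+S_i$ satisfies $S'=\alpha_s S(1-S/\kappa_s)-\mu_s S-\gamma_s S_i$, so that $S'\le 0$ whenever $S=\kappa_s$; hence $S$, and a fortiori $S_i$, never crosses the threshold controlling the sign of $S_s'$ when the data lie in $\Omega$. Equivalently, a short-time bootstrap using the variation-of-constants representation $x_i(t)=x_i(0)e^{-\int_0^t h_i}+\int_0^t(\text{nonnegative source})\,e^{-\int_s^t h_i}\,ds$ for each component, closed by a continuity argument, does the job.

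\textbf{Step 3: boundedness.} The aggregated variables form a triangular cascade, which I would handle with the scalar comparison principle. First, $S=S_s+S_i$ obeys the logistic differential inequality $S'\le(\alpha_s-\mu_s)S-\tfrac{\alpha_s}{\kappa_s}S^2$, whose solutions are bounded above by the maximum of $S(0)$ and the associated carrying capacity, yielding the bound on $S_s+S_i$ in the statement and hence on $S_i$. Next, $M_j:=M_{j,s}+M_{j,i}$ satisfies $M_j'=\alpha_j-\mu_j M_{j,s}-(\mu_j+\gamma_j)M_{j,i}\le\alpha_j-\mu_j M_j$, so $M_j(t)\le\max(\alpha_j/\mu_j,M_0)$. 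Then $L'=\nu_s S_i-(\sum_j\omega_j M_{j,s}+\mu_l)L\le\nu_s S_i-\mu_l L$ together with the bound on $S_i$ yields the bound on $L$, and finally $E'=\sum_j\theta_j M_{j,i}-(\omega_s S_s+\mu_e)E\le\sum_j\theta_j M_{j,i}-\mu_e E$ together with the bounds on the $M_{j,i}$ yields the bound on $E$. Each step uses only the elementary fact that $y'\le a-by$ forces $y(t)\le\max(y(0),a/b)$, and its analogue for the logistic equation.

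\textbf{Step 4 and the main obstacle.} Steps 2--3 show $\Omega$ is positively invariant; being bounded, it confines the solution to a fixed compact set, so $\|U(t)\|$ stays bounded and the blow-up alternative of Step~1 forces $T_{\max}=+\infty$, giving the global solution in $\mathcal{C}(\mathbb{R}_+,\Omega)$, uniqueness being inherited from Picard--Lindel\"of. The only genuinely non-routine point is the nonnegativity of the susceptible-snail compartment $S_s$: its recruitment term $\alpha_s(S_s+S_i)(1-(S_s+S_i)/\kappa_s)$ changes sign once the total snail population passes $\kappa_s$, so the positivity argument cannot treat the snail compartments in isolation but must be run jointly with the logistic bound on $S_s+S_i$; everything else is routine comparison-principle bookkeeping.
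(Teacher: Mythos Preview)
Your proposal is correct and follows essentially the same route as the paper: Cauchy--Lipschitz for local existence, quasi-positivity of $F$ on the coordinate hyperplanes for nonnegativity, and the same cascade of Gr\"onwall/logistic differential inequalities on $S=S_s+S_i$, $M_j=M_{j,s}+M_{j,i}$, $L$, and $E$ for the upper bounds. Your treatment is in fact slightly more careful than the paper's in one respect: you explicitly flag that the nonnegativity of $S_s'$ on $\{S_s=0\}$ requires $S_i\le\kappa_s$ and must therefore be argued jointly with the logistic bound on the total snail population, whereas the paper handles this only implicitly by restricting attention to points already in $\Omega$.
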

\begin{proof}
Considering the initial value problem 
$$ U^\prime(t)= F(U(t), t) \quad \text{where}  \quad U(0)=U_0,$$
where the right-hand-side $F$ satisfies the local Lipschitz condition, then the Cauchy-Lipschitz theorem ensures the local well-posedness. 
\\
On the other hand, we note that for all $(E, S_s, S_i, L, M_{1,s}, M_{1,i}, \cdots, M_{m,s}, M_{m,i}) \in \Omega$, we have 
\begin{eqnarray*}
f_1(0, S_s, S_i, L, M_{1,s}, M_{1,i}, \cdots, M_{m,s}, M_{m,i}) &=& \sum_{j=1}^m \theta_j M_{j,i}\ge 0  \\
f_2(E, 0, S_i, L, M_{1,s}, M_{1,i}, \cdots, M_{m,s}, M_{m,i}) &=& \alpha_s S_i \left(1-\dfrac{S_i}{\kappa_s}\right) \ge 0\\
f_3(E, S_s, 0, L, M_{1,s}, M_{1,i}, \cdots, M_{m,s}, M_{m,i}) &=& \omega_s \beta_s S_s E\ge 0\\
f_4(E, S_s, S_i, 0, M_{1,s}, M_{1,i}, \cdots, M_{m,s}, M_{m,i}) &=& \nu_s S_i \ge 0
\\
f_{5,s}(E, S_s, S_i, L, 0, M_{1,i}, \cdots, M_{m,s}, M_{m,i}) &=& \alpha_1 \ge 0
\\
f_{5,i}(E, S_s, S_i, L, M_{1,s}, 0, \cdots, M_{m,s}, M_{m,i}) &=& \omega_1 \beta_1 L M_{1,s} \ge 0\\
\vdots\\
f_{m+4,s}(E, S_s, S_i, L,M_{1,s}, M_{1,i}, \cdots, 0, M_{m,i}) &=& \alpha_m \ge 0\\
f_{m+4,i}(E, S_s, S_i, L, M_{1,s}, M_{1,i}, \cdots, M_{m,s}, 0) &=& \omega_m \beta_m L M_{m,s}\ge 0.
\end{eqnarray*}
We deduce that, starting from a nonnegative initial data, 
the corresponding solution remains nonnegative.
\\
Let $S = S_s + S_i$ and $ M_j  = M_{j,s} + M_{j,i}$ be the total population of snails and mammals respectively. We have
\begin{eqnarray*}
   S^\prime(t) = S_s'(t) + S_i'(t) &=&\alpha_s (S_s(t) + S_i(t)) \left( 1 - \frac{S_s(t) + S_i(t)}{\kappa_s}\right) - \mu_s S_s(t)- (\mu_s + \gamma_s) S_i(t)\\
    &=&\alpha_s S(t) \left( 1 - \frac{S(t)}{\kappa_s}\right) - \mu_s S(t) -  \gamma_s S_i(t).
\end{eqnarray*}
Since the solution is nonnegative,
\begin{eqnarray*}
    S^\prime(t) & \le &\alpha_s S(t) \left( 1 - \frac{S(t)}{\kappa_s}\right) - \mu_s S(t) 
    \\
    & \le & \left( (\alpha_s-\mu_s) - \dfrac{\alpha_s}{\kappa_s}  S(t)\right) S(t),
\end{eqnarray*}
and the Grönwall lemma implies that  $S(t) \le \dfrac{\kappa_s(\alpha_s-\mu_s ) S_0 e^{(\alpha_s-\mu_s)t}}{\kappa_s(\alpha_s-\mu_s ) + \alpha_s S_0 e^{(\alpha_s-\mu_s)t}} \le \max(\kappa_s (\alpha_s-\mu_s),S_0)$.
\\
Concerning the mammals, we write
\begin{eqnarray*}
   M'_j(t) = M_{j,s}'(t) +(t) M_{j,i}'(t) &= &  \alpha_j-\mu_j M_{j,s}(t) -(\mu_j+\gamma_j)M_{j,i}(t)\\
   & =& \alpha_j-\mu_j M_j(t) - \gamma_j M_{j,i}(t)
 \end{eqnarray*}  
 and by positivity of the solution
 \begin{eqnarray*}
   M_j'(t) &\le & \alpha_j - \mu_j M_j(t).
\end{eqnarray*}
From Grönwall's lemma, $M(t)  \le \dfrac{\alpha_j}{\mu_j} + e^{-\mu_jt} \left( M_j(0) - \dfrac{\alpha_j}{\mu_j} \right) \le \max \left(\dfrac{\alpha_j}{\mu_j}, M_0\right)$.\\
Because $S$ and $M_j$ are nonnegative and bounded,
\begin{eqnarray*}
    E'(t) & =  &\sum_{j=1}^{m} \theta_j M_{j,i} - \omega_s S_s E - \mu_e E\\
     & \le  & \sum_{j=1}^{m} \theta_j \max \left(\dfrac{\alpha_j}{\mu_j}, M_0\right) - \mu_e E
\end{eqnarray*}
and
\begin{eqnarray*}
     L'(t) &=& \nu_s S_i - \sum_{j=1}^{m} \omega_j M_{j,s} L - \mu_l L\\
     &\le & \nu_s \max(\kappa_s (\alpha_s-\mu_s),S_0) - \mu_l L.
\end{eqnarray*}
Finally, applying Grönwall's lemma again provides
\begin{eqnarray*}
    E(t)  & \le  & \dfrac{\sum_{j=1}^{m} \theta_j \max \left(\dfrac{\alpha_j}{\mu_j}, M_0\right)}{\mu_e} + e^{-\mu_e t} \left( E_0 - \dfrac{\sum_{j=1}^{m} \theta_j \max \left(\dfrac{\alpha_j}{\mu_j}, M_0\right)}{\mu_e} \right) 
  \\  &\le & \max \left(\dfrac{\sum_{j=1}^{m} \theta_j \max \left(\dfrac{\alpha_j}{\mu_j}, M_0\right)}{\mu_e}, E_0\right),
    \\
    L(t) & \le  & \dfrac{\nu_s \max(\kappa_s (\alpha_s-\mu_s),S_0)}{\mu_l} + e^{-\mu_l t} \left( L_0 - \dfrac{\nu_s \max(\kappa_s (\alpha_s-\mu_s),S_0)}{\mu_j} \right) \\  &\le &  \max \left(\dfrac{\nu_s \max(\kappa_s (\alpha_s-\mu_s),S_0)}{\mu_l}, L_0\right).
\end{eqnarray*}
\end{proof}

Formally, three equilibriums may exist:
\begin{itemize}
    \item The first disease free equilibrium
$DFE_1= (0,0,0,0,M_1^*,0, \cdots, M_m^*, 0).$
\item The second disease free equilibrium
$DFE_2=(0,S^*,0,0,M_1^*,0, \cdots, M_m^*, 0) $ 
   with $S^*=\dfrac{\kappa_s(\alpha_s-\mu_s)}{\alpha_s} , M_j^*=\dfrac{\alpha_j}{\mu_j}$ for $1 \le j\le m.$ 
   \item The endemic equilibrium $EE=(E^*, S_s^*, S_i^*, L^*, M_{1,s}^*, M_{1,i}^*, \cdots, M_{m,s}^*, M_{m,i}^*)$ with, for $1 \le j\le m$, 
   \begin{align*}
&E^*=\dfrac{\sum_{j=1}^m\dfrac{\omega_j \beta_j \theta_j \alpha_j L^*}{(\omega_j \beta_j L^*+\mu_j)(\mu_j+\gamma_j)}}{(\omega_s S_s^*+\mu_e)}\\
  &S_s^* = \dfrac{\sum_{j=1}^{m} \left( \dfrac{\omega_j \alpha_j}{\omega_j \beta_j L^* +\mu_j }+\mu_l\right)(\mu_s +\gamma_s )\mu_e}{\omega_s \nu_s \beta_s \sum_{j=1}^{m}\dfrac{\omega_j \beta_j \theta_j \alpha_j}{(\omega_j \beta_j L^* +\mu_j)(\mu_j+\gamma_j)}-\omega_s(\mu_s +\gamma_s ) \sum_{j=1}^{m} \left( \dfrac{\omega_j \alpha_j}{\omega_j \beta_j L^* +\mu_j}+\mu_l\right)} 
  \\
&S_i^* = \dfrac{\omega_s \beta_s S_s^* \sum_{j=1}^m\dfrac{\omega_j \beta_j \theta_j \alpha_j L^*}{(\omega_j \beta_j L^*+\mu_j)(\mu_j+\gamma_j)}}{(\omega_s S_s^*+\mu_e)(\mu_s+\gamma_s)}\\
&M_{j,s}^* = \dfrac{\alpha_j}{\omega_j \beta_j L^*+\mu_j}\\
&M_{j,i}^* = \dfrac{\omega_j \beta_j \alpha_j L^*}{(\omega_j\beta_j L^*+\mu_j)(\mu_j+\gamma_j)}.
\end{align*}
\end{itemize}
The basic reproduction number $\mathcal{R}_0$ can be computed thanks to the next generation matrix of the model.
Since the infected individuals are in $E, S_i, L, M_{j,i}$, 
the rate of appearance of new infections in each compartment $\mathcal{F}$  and  the rate of other transitions between all compartments $\mathcal{V}$ can be rewritten  as 
$$\mathcal{F} =\begin{pmatrix} 
  \sum_{j=1}^{m} \theta_j M_{j,i} -\omega_s S_s E \\
  \omega_s \beta_s S_s E \\
  \nu_s S_i  - \sum_{j=1}^{m} \omega_j M_{j,s} L \\
  \omega_1 \beta_1 L M_{1,s} \\
  \vdots \\
  \omega_m \beta_m L M_{m,s} 
 \end{pmatrix}, 
\; \mathcal{V} = \begin{pmatrix}
  \mu_e E \\ 
  (\mu_s + \gamma_s) S_i \\
  \mu_l L \\
  (\mu_1 + \gamma_1) M_{1,i}\\
  \vdots  \\
  (\mu_m + \gamma_m) M_{1,m} 
\end{pmatrix}.
$$
Thus,
$$
F = \begin{pmatrix}
  -\omega_s S_s & 0 & 0 & \theta_1 & \hdots & \theta_m \\ 
  \omega_s \beta_s S_s & 0 & 0 & 0 & \hdots & 0 \\ 
  0 & \nu_s & -\sum_{j=1}^{m} \omega_j M_{j,s} & 0 & \hdots & 0 \\
  0 & 0 & \omega_1 \beta_1 M_{1,s} & 0 & \hdots & 0 \\ 
  0 & 0 & \vdots & 0 & \hdots & 0 \\
  0 & 0 & \omega_m \beta_m M_{m,s} & 0 & \hdots & 0
 \end{pmatrix},$$
 $$
V= \begin{pmatrix}  
  \mu_e & 0 & 0 & 0 & \hdots & 0 \\ 
  0 & (\mu_s + \gamma_s) & 0 & 0 & \hdots & 0 \\
  0 & 0 & \mu_l & 0 & \hdots & 0\\
  0 & 0 & 0 & (\mu_1 + \gamma_1)  & & 0\\
  \vdots  &   & \ddots  &  & \ddots  & \vdots \\
  0  & 0  & 0  & 0 & & (\mu_m + \gamma_m) 
  \end{pmatrix}.
  $$
Therefore,  the next generation matrix is
$$ FV^{-1}=\begin{pmatrix}- \dfrac{ \omega_{s}S^*}{\mu_{e}} & 0 & 0 & \dfrac{\theta_{1}}{\gamma_{1} + \mu_{1}} & \cdots & \dfrac{\theta_{m}}{\gamma_{m} + \mu_{m}}\\
\dfrac{ \omega_{s}\beta_{s} S^*}{\mu_{e}} & 0 & 0 & 0 & \cdots& 0\\
0 & \dfrac{\nu_{s}}{\gamma_{s} + \mu_{s}} & \dfrac{- \sum_{j=1}^m  \omega_{j}M_{j}^*}{\mu_{l}} & 0& \cdots& 0\\
0 & 0 & \dfrac{\omega_{1} \beta_{1} M_{1}^*}{\mu_{l}} & 0 & \cdots &0\\
0& 0 & \vdots & 0 & \cdots  &0\\
0 & 0 & \dfrac{ \omega_{m}\beta_{m} M_{m}^*}{\mu_{l}} & 0 &\cdots& 0\end{pmatrix}
$$
and the characteristic equation of $FV^{-1}$ is
$$\lambda^{m-1}\left( \left(-\lambda-\dfrac{\sum_{j=1}^m \omega_j M_j^*}{\mu_l}\right) \left(-\lambda-\dfrac{\omega_s S^*}{\mu_e}\right)\lambda^{2}- \left(\dfrac{\omega_s \beta_s S^*}{\mu_s+\gamma_s}\right)\left(\dfrac{\nu_s}{\mu_e}\right)\sum_{j=1}^m \left(\dfrac{\omega_j \beta_j M_j^*}{(\mu_j+\gamma_j)}\right)\left(\dfrac{\theta_j}{\mu_l}\right)\right)=0.$$
We deduce that the basic reproduction number is
$$
\mathcal{R}_0 :=  
\sum_{j=1}^m \left(\frac{\omega_j \beta_j M_j^*}{\mu_j+\gamma_j} \right)\left( \frac{\theta_j}{\mu_e} \right)  \left(\frac{\omega_s \beta_s S^*}{\mu_s+\gamma_s} \right)  \left( \frac{\nu_s}{\mu_l} \right) , $$
with $M_j^* = \dfrac{\alpha_j}{\mu_j}, S^* = \dfrac{\kappa_s(\alpha_s-\mu_s)}{\alpha_s}.$

\begin{prop}
  \begin{enumerate}
     \item If $\alpha_s < \mu_s$, the disease-free equilibrium $DFE_1$ is locally asymptotically stable.
    \item If $\alpha_s > \mu_s$ and $\mathcal{R}_0 \leq 1$, the disease-free equilibrium $DFE_2$ is locally asymptotically stable.
    \item If $\alpha_s > \mu_s$ and  $\mathcal{R}_0 > 1$, the endemic equilibrium EE is locally asymptotically stable.
  \end{enumerate}
\end{prop}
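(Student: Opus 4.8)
The plan is to analyze the local stability of each equilibrium via the Jacobian of $F$, exploiting the block structure inherited from the next-generation decomposition $F=\mathcal{F}-\mathcal{V}$ together with the van den Driessche--Watmough theorem. First I would treat $DFE_1$: here the snail population is absent, so linearizing the $S_s$ equation around $S_s=0$ gives the scalar factor $\alpha_s-\mu_s$ (from $\alpha_s(S_s+S_i)(1-\cdot)-\mu_s S_s$ differentiated at the origin), which is negative precisely when $\alpha_s<\mu_s$; the remaining diagonal/triangular blocks for $E$, $S_i$, $L$, $M_{j,s}$, $M_{j,i}$ contribute eigenvalues $-\mu_e$, $-(\mu_s+\gamma_s)$, $-\mu_l$, $-\mu_j$, $-(\mu_j+\gamma_j)$, all negative. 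So when $\alpha_s<\mu_s$ every eigenvalue of the Jacobian at $DFE_1$ has negative real part and $DFE_1$ is locally asymptotically stable.

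Next I would handle $DFE_2$, which exists and is biologically meaningful exactly when $\alpha_s>\mu_s$ (so that $S^*=\kappa_s(\alpha_s-\mu_s)/\alpha_s>0$). The key step is to separate the ``disease'' variables $(E,S_i,L,M_{1,i},\dots,M_{m,i})$ from the ``non-disease'' variables $(S_s,M_{1,s},\dots,M_{m,s})$. The Jacobian at $DFE_2$ is block lower-triangular with respect to this splitting: the non-disease block, evaluated at $S_s=S^*$, $M_{j,s}=M_j^*$, has eigenvalues governed by the logistic snail dynamics at its carrying capacity (giving $-(\alpha_s-\mu_s)<0$, using $\alpha_s>\mu_s$) and by $-\mu_j<0$, hence is stable; the disease block is exactly $F-V$ with the matrices $F$, $V$ computed in the excerpt. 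By the van den Driessche--Watmough next-generation theorem, the disease block has all eigenvalues with negative real part iff the spectral radius of $FV^{-1}$ is less than $1$, i.e. iff $\mathcal{R}_0<1$; the boundary case $\mathcal{R}_0=1$ yields a simple zero eigenvalue, so strictly one should state local asymptotic stability for $\mathcal{R}_0<1$ and treat $\mathcal{R}_0=1$ with care (center-manifold or a direct Lyapunov argument). Thus $\alpha_s>\mu_s$ and $\mathcal{R}_0<1$ give local asymptotic stability of $DFE_2$.

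For the endemic equilibrium $EE$ (present when $\alpha_s>\mu_s$ and $\mathcal{R}_0>1$) I would again write the Jacobian at $EE$ and reduce the stability question to a Routh--Hurwitz analysis. Because the full system has dimension $2m+4$, the practical route is to use the structure: the mammal pairs $(M_{j,s},M_{j,i})$ couple to the rest only through $L$, and each pair contributes a $2\times2$ block with trace $-(\omega_j\beta_j L^*+\mu_j)-(\mu_j+\gamma_j)<0$ and positive determinant, which lets one perform a reduction (or a quasi-steady elimination in the linearization) down to a low-dimensional core system in $(E,S_s,S_i,L)$ whose characteristic polynomial can be checked by Routh--Hurwitz, with the inequality $\mathcal{R}_0>1$ ensuring the relevant coefficients have the right sign. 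An alternative, cleaner-to-present argument invokes the general principle that at a transcritical bifurcation the endemic branch that appears as $\mathcal{R}_0$ crosses $1$ inherits the stability lost by the DFE (forward bifurcation), which one can justify here via the Castillo-Chavez--Song bifurcation theorem by checking the sign of the usual $a$ and $b$ coefficients.

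The main obstacle is the endemic case: unlike the two disease-free equilibria, the Jacobian at $EE$ has no exploitable triangular structure, all compartments are coupled, and the explicit equilibrium formulas are unwieldy, so a direct Routh--Hurwitz computation on a $(2m+4)$-degree polynomial is infeasible in closed form. I expect the honest proof to rely either on the dimension-reduction described above combined with a monotonicity/sign argument in $\mathcal{R}_0$, or on a local bifurcation theorem that transfers stability across the threshold; establishing the forward (non-backward) nature of the bifurcation is the crux. A secondary technical point to not gloss over is the borderline $\mathcal{R}_0=1$ in part (2), which the next-generation theorem does not directly cover.
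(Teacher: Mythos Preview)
The paper states this proposition but does not supply a proof for it; no proof environment follows the statement, and the text moves directly to a biological interpretation of $\mathcal{R}_0$ and then to the theorem on \emph{global} stability. So there is nothing in the paper to compare your argument against line by line.

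That said, your outline is essentially the standard route one would expect the authors to have in mind. Your treatment of $DFE_1$ is correct: the crucial observation is that $S_s^*=0$ kills the $E\to S_i$ coupling in the linearization, so the infected block becomes (lower) triangular with negative diagonal, and the $(S_s,S_i)$ subblock has eigenvalue $\alpha_s-\mu_s$ governing the snail direction. For $DFE_2$ your invocation of van den Driessche--Watmough matches exactly the $F$, $V$ matrices the paper computes, so this is surely the intended argument; your caveat about $\mathcal{R}_0=1$ is well taken and is indeed glossed over in the paper's statement. For $EE$ you correctly identify that a direct Routh--Hurwitz on a $(2m+4)$-dimensional Jacobian is not tractable and that a Castillo-Chavez--Song forward-bifurcation argument is the realistic path; the paper offers no indication of how item (3) is meant to be established, so your honest assessment of this as the genuine obstacle is appropriate rather than a gap on your part.
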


The basic reproduction number $\mathcal{R}_0$ has a
biologically meaning when $\alpha_s > \mu_s$. It means that  $\left( \dfrac{\nu_s}{\mu_e} \right) \left(\dfrac{\omega_s \beta_s S^*}{\mu_s+\gamma_s} \right) $
represents the transmission rate due to the snails infected by eggs during the infection period of snails $\dfrac{1}{\mu_s+\gamma_s}$.
The term $\left( \dfrac{\theta_j}{\mu_l} \right) \left(\dfrac{\omega_j \beta_j M_j^*}{\mu_j+\gamma_j} \right)$
represents the transmission rate due to the mammals $j$ infected by larvae during the infection period of mammals $\dfrac{1}{\mu_j+\gamma_j}$.

\begin{thm}
  \begin{enumerate}
     \item If $\alpha_s < \mu_s$, the disease-free equilibrium $DFE_1$ is globally asymptotically stable.
    \item If $\alpha_s > \mu_s$ and $\mathcal{R}_0 \leq 1$, the disease-free equilibrium $DFE_2$ is globally asymptotically stable.
  \end{enumerate}
\end{thm}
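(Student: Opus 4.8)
The overall plan is, in each regime, to first prove \emph{global attractivity} of the relevant equilibrium and then invoke the local asymptotic stability already recorded in the Proposition to obtain global asymptotic stability; throughout, every trajectory stays in the bounded invariant set $\Omega$ of Theorem~\ref{thm1}. \emph{Regime $\alpha_s<\mu_s$}: argue by a triangular cascade of one-sided comparison estimates. Writing $S:=S_s+S_i$, one has $S'=\tfrac{\alpha_s}{\kappa_s}S(S^{\ast}-S)-\gamma_s S_i\le(\alpha_s-\mu_s)S$ with $S^{\ast}:=\tfrac{\kappa_s(\alpha_s-\mu_s)}{\alpha_s}$, so Grönwall's lemma gives $S(t)\le S(0)e^{(\alpha_s-\mu_s)t}\to0$ and hence $S_s,S_i\to0$. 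Feeding $S_i\to0$ into $L'\le\nu_s S_i-\mu_l L$ gives $L\to0$ (variation of constants); then $M_{j,i}'=\omega_j\beta_j LM_{j,s}-(\mu_j+\gamma_j)M_{j,i}$ with $M_{j,s}$ bounded gives $M_{j,i}\to0$; then $M_{j,s}'=\alpha_j-\mu_j M_{j,s}+o(1)$ gives $M_{j,s}\to\alpha_j/\mu_j$; and finally $E'\le\sum_j\theta_j M_{j,i}-\mu_e E$ gives $E\to0$. Thus every trajectory tends to $DFE_1$, which is therefore globally asymptotically stable.

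\emph{Regime $\alpha_s>\mu_s$, $\mathcal{R}_0\le1$}: I would first record the asymptotic estimates $\limsup_{t\to\infty}(S_s+S_i)(t)\le S^{\ast}$, which follows from $S'\le\tfrac{\alpha_s}{\kappa_s}S(S^{\ast}-S)$ and in fact makes $(S(t)-S^{\ast})^{+}$ decay exponentially, and likewise $\limsup_{t\to\infty}M_{j,s}(t)\le M_j^{\ast}:=\alpha_j/\mu_j$. Since $-\omega_s S_s E\le0$ and $-\sum_j\omega_j M_{j,s}L\le0$, the infected vector $x=(E,S_i,L,M_{1,i},\dots,M_{m,i})^{T}$ obeys, for $t$ large, a componentwise differential inequality $x'\le B_\varepsilon x$, where $B_\varepsilon$ is the Metzler matrix obtained by dropping those two terms and using $S_s\le S^{\ast}+\varepsilon$, $M_{j,s}\le M_j^{\ast}+\varepsilon$. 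Splitting $B_0=\widehat F-V$ with $V=\mathrm{diag}(\mu_e,\mu_s+\gamma_s,\mu_l,\mu_1+\gamma_1,\dots,\mu_m+\gamma_m)$ and $\widehat F\ge0$ the block of transmission couplings evaluated at $S^{\ast},M_j^{\ast}$, a short computation shows that the nonzero eigenvalues of $\widehat F V^{-1}$ solve $\lambda^{4}=\mathcal{R}_0$, so $\rho(\widehat F V^{-1})=\mathcal{R}_0^{1/4}$ and, by the van~den~Driessche--Watmough characterisation, the spectral abscissa of $B_0$ is negative iff $\mathcal{R}_0<1$ and zero iff $\mathcal{R}_0=1$.

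If $\mathcal{R}_0<1$, then $B_\varepsilon$ is Hurwitz for small $\varepsilon$, so comparison with the cooperative linear flow $y'=B_\varepsilon y$ gives $0\le x(t)\le y(t)\to0$; thus $E,S_i,L,M_{j,i}\to0$, after which $S_s$ solves an asymptotically autonomous scalar equation with limit equation $s'=\tfrac{\alpha_s}{\kappa_s}s(S^{\ast}-s)$ forcing $S_s\to S^{\ast}$ (it cannot approach the unstable state $0$ once $S_s(0)+S_i(0)>0$, since $S_s'\ge S_s(\alpha_s-\mu_s-o(1))$ while $S_s,E$ are small), and then $M_{j,s}\to M_j^{\ast}$ --- i.e.\ the trajectory tends to $DFE_2$. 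For the borderline value $\mathcal{R}_0=1$ I would instead use the linear Lyapunov function $\mathcal{W}=c^{T}x$ with $c>0$ a left null vector of the irreducible matrix $B_0$; the null-vector relations $c_1\mu_e=c_2\omega_s\beta_s S^{\ast}$, $c_2(\mu_s+\gamma_s)=c_3\nu_s$, $c_3\mu_l=\sum_j c_{4,j}\omega_j\beta_j M_j^{\ast}$, $c_{4,j}(\mu_j+\gamma_j)=c_1\theta_j$ collapse $\dot{\mathcal{W}}$ to $-c_1\omega_s S_s E-c_3 L\sum_j\omega_j M_{j,s}+c_2\omega_s\beta_s(S_s-S^{\ast})E+\sum_j c_{4,j}\omega_j\beta_j(M_{j,s}-M_j^{\ast})L$, which is $\le0$ wherever $S_s\le S^{\ast}$ and all $M_{j,s}\le M_j^{\ast}$, in particular on the $\omega$-limit set; LaSalle's invariance principle there forces $E\equiv L\equiv0$, hence $S_i\equiv M_{j,i}\equiv0$ and $S_s\equiv S^{\ast}$, $M_{j,s}\equiv M_j^{\ast}$, so once more $DFE_2$ is approached. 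In either sub-case the Proposition upgrades attractivity to global asymptotic stability.

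The step I expect to be the main obstacle is precisely the borderline case $\mathcal{R}_0=1$ together with the two ``consumption'' terms $-\omega_s S_s E$ and $-\sum_j\omega_j M_{j,s}L$, which are absent from textbook SIR-type systems: when $\mathcal{R}_0=1$ the $\varepsilon$-enlarged comparison matrix $B_\varepsilon$ has positive spectral abscissa, so a plain comparison theorem is unavailable and one must run the Lyapunov/LaSalle argument with care (or enlarge $\mathcal{W}$ by Volterra-type terms in $S_s$ and the $M_{j,s}$ to absorb those couplings). A minor caveat is that $DFE_2$ is globally attractive only off the invariant face $\{S_s=S_i=0\}$, along which every trajectory instead converges to $DFE_1$.
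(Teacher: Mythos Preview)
Your approach is correct but takes a genuinely different route from the paper's. The paper's proof is a brief appeal to Theorem~9.2 of \cite{brauer2011} (a Castillo--Chavez--Feng--Huang type result): it writes the system as $x'=-Ax-f(x,y)$, $y'=g(x,y)$ with $x$ the infected and $y$ the susceptible compartments, displays $-A$ and $g$, notes that the relevant $y^{*}$ is globally asymptotically stable for $y'=g(0,y)$, and stops --- the sign condition on $f$, the stability of $-A$, and the borderline $\mathcal{R}_0=1$ are not discussed. You instead argue from first principles: a Gr\"onwall cascade for $\alpha_s<\mu_s$; a cooperative linear comparison with the van~den~Driessche--Watmough threshold for $\mathcal{R}_0<1$; and a Perron left-eigenvector Lyapunov function plus LaSalle at $\mathcal{R}_0=1$. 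Your choice to drop the consumption terms $-\omega_s S_sE$ and $-\sum_j\omega_j M_{j,s}L$ from the transmission block yields the clean identity $\rho(\widehat F V^{-1})=\mathcal{R}_0^{1/4}$ (the paper keeps them in $F$, producing a more involved characteristic polynomial) and is exactly what is needed to make $f\ge0$ on the absorbing region $\{S_s\le S^{*},\;M_{j,s}\le M_j^{*}\}$ --- the unstated hypothesis behind the cited theorem. You also correctly flag that global attraction to $DFE_2$ holds only off the invariant face $\{S_s=S_i=0\}$, a caveat absent from the paper. The price is length: the paper's proof is three lines, yours would be a page, but yours is self-contained and surfaces the model's nonstandard features rather than burying them in a citation.
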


\begin{proof} We use Theorem 9.2 of \cite{brauer2011} after rewritting the system as 
\begin{eqnarray*}
  x' &=&  -A x - f(x,y) \\ 
  y' &=& g(x,y)
\end{eqnarray*}
with $x = (E, S_i, L, M_{1,i}, M_{2,i}, \dots, M_{m,i})$, $y = (S_s, M_{1,s}, M_{2,s},\dots, M_{m,s})$, 
and
$$
  g(x,y) =  
  \begin{pmatrix}
   -\omega_s \beta_s S_s E + \alpha_s (S_s + S_i) \left( 1 - \frac{S_s + S_i}{\kappa_s}\right)- \mu_s S_s\\
   -\omega_1 \beta_1 L M_{1,s} + \alpha_1 - \mu_1 M_{1,s}\\
   \vdots\\ 
    -\omega_m \beta_m L M_{m,s} + \alpha_1 - \mu_j M_{m,s}
  \end{pmatrix},
$$ 
and
$$
  -A  =  
  \begin{pmatrix}
  - \omega_s S^* - \mu_e  & 0 & 0 & \theta_1 & \dots & \dots & \theta_m \\
   \omega_s \beta_s S_s & - (\mu_s + \gamma_s) & 0 & 0 & \dots & \dots & 0\\
   0 & \nu_s & - \sum_{j=1}^{m} \omega_j M_{j}^* - \mu_l & 0 & \dots & \dots & 0\\
   0 & 0 & \omega_1 \beta_1 M_{1}^* & - (\mu_1 + \gamma_1)  & 0 & \dots & 0 \\
   \vdots & \vdots & \vdots  & 0 & \ddots & \ddots & \vdots \\
   \vdots & \vdots & \vdots  & \vdots & \ddots & \ddots & 0 \\
   0 & 0 & \omega_m \beta_m M_{m}^* & 0 & \dots & 0 & - (\mu_m + \gamma_m)
  \end{pmatrix},
$$
such that $y^* =(0,M_1^*,\cdots, M_m^*)$ is globally asymptotically stable for the differential equation
$y'(t) = g(0,y(t))$ if $\alpha_s < \mu_s$ and , and on the other hand $y^* =(S^*,M_1^*,\cdots, M_m^*)$ is globally asymptotically stable for the differential equation
$y'(t) = g(0,y(t))$ if $\alpha_s > \mu_s$ and $\mathcal{R}_0 \leq 1$.
\end{proof}

\subsection{Optimal control strategy}
This paper aims to reduce the number of humans and bovines infected by controlling the population of the vector—\textit{Oncomelania quadrasi} snails, which harbor \textit{Schistosoma japonicum}. To achieve this goal, we consider three control inputs: $u_a$ represents the rate at which eggs, miracidia and cercariae are exposed to aquatic controls, such as installing sanitation facilities like toilets and using machinery like tractors; $u_s$ denotes the rate at which snails are exposed to molluscicides, while $u_\kappa$ signifies the rate at which the snail population (both susceptible and infected) is subjected to habitat modification. Finally, $u_j$ represents the rates at which both humans and bovines are exposed to chemotherapy treatment. Assume that all control inputs are piecewise continuous functions that take their value in a positively bounded set
$W = [0, u_{Amax}] \times [0, u_{Smax}] \times  [0, u_{Mmax,1} ] \times \cdots \times  [0, u_{Mmax,m_c}]$.\\
We consider the objective function
$$
J(u) = \int_0^T  \sum_{j=1}^{m_c} M_{j,i}(t) + A_a u_a^2(t) + A_s u_s^2(t) + A_{\kappa} u_{\kappa}^2(t) +  \sum_{j=1}^{m_c} A_j u_j^2(t) dt
$$
subject to, for $t \in [0,T]$,
\begin{equation}
  \label{e2}
  \left\{
      \begin{aligned}
        E'(t) & =  \sum_{j=1}^{m} \theta_j M_{j,i} - \omega_s S_s E - \mu_e E - u_a(t) E 
        \\
        S_s'(t) & =  -\omega_s \beta_s S_s E + \alpha_s (S_s + S_i) \left( 1 - \frac{(S_s + S_i)}{\kappa_s/u_{\kappa}(t)}\right) - \mu_s S_s - u_s(t) S_s
        \\
        S_i'(t) & =  \omega_s \beta_s S_s E - (\mu_s + \gamma_s) S_i - u_s(t) S_i
        \\
        L'(t) &= \nu_s S_i - \sum_{j=1}^{m} \omega_j M_{j,s} L - \mu_l L - u_a(t) L
        \\
        M_{j,s}'(t) & =  -\omega_j \beta_j L M_{j,s} + \alpha_j - \mu_j M_{j,s}, \hskip 2.8cm 1 \leq j \leq m
        \\
        M_{j,i}'(t) & =  \omega_j \beta_j L M_{j,s} - (\mu_j + \gamma_j) M_{j,i} - u_j(t) M_{j,i}, \hskip 1.1cm 1 \leq j \leq m  
      \end{aligned}
    \right.
\end{equation}
The variables $A_i$ are the positive weights associated with the control variables $u_i, i= a, s,\kappa, j$, respectively.
They correspond to the efforts rendered in exposing the eggs, cercariae, snails, human and bovines to our control strategy.
\begin{lem}
There exists an optimal control $u^*= \left(u^*_a(t), u^*_s(t), u^*_\kappa(t), u^*_1(t), \dots, u^*_{m_c}(t)\right)$ such that 
$$ J(u^*)= \min_{u\in W} J(u) $$
under the constraints $\left( E, S_s, S_i, L, M_{1,s}, M_{1,i}, \dots, M_{m,s}, M_{m,i} \right)$ is solution of \eqref{e2}.
\end{lem}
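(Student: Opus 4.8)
The plan is to use the direct method of the calculus of variations, in the spirit of the classical Fleming--Rishel existence theorem for optimal control. There are six ingredients to verify: (i) the admissible set is nonempty; (ii) for every admissible control the state system \eqref{e2} has a solution on all of $[0,T]$, bounded by a constant independent of the control; (iii) the control set $W$ is convex and compact; (iv) the right-hand side of \eqref{e2} is affine in the control and has linear growth in the state; (v) the control-dependent part of the integrand of $J$ is convex in $u$; and (vi) that part is coercive in $u$. Points (iii), (v), (vi) are immediate: $W$ is a finite product of compact intervals, hence convex and compact; the map $u\mapsto A_a u_a^2 + A_s u_s^2 + A_\kappa u_\kappa^2 + \sum_{j=1}^{m_c} A_j u_j^2$ is a positive-definite quadratic form, hence convex and coercive; and the full integrand is nonnegative because the $M_{j,i}$ are nonnegative.

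For (i), (ii), (iv): for any constant control $u\equiv c\in W$ the right-hand side of \eqref{e2} is locally Lipschitz, so Cauchy--Lipschitz gives a local solution. Writing $S=S_s+S_i$, $M_j=M_{j,s}+M_{j,i}$ and arguing exactly as in the proof of Theorem~\ref{thm1} — but now on the finite horizon $[0,T]$, where crude Gr\"onwall estimates of the form $S'\le\alpha_s S$, $M_j'\le\alpha_j$, $E'\le\sum_j\theta_j M_j$, $L'\le\nu_s S$ already suffice — one obtains bounds on $(E,S,L,M_j)$, and hence on $U$, depending only on $T$, the initial data and the parameters, and \emph{not} on the control. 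In particular, because the horizon is finite the potential singularity of the logistic term $\kappa_s/u_\kappa$ as $u_\kappa\to 0$ is harmless: dropping that (nonpositive) term only enlarges $S'$, and $S(t)\le S_0 e^{\alpha_s T}$ still holds. Thus the solution is global on $[0,T]$, proving (i); the identical computation for an arbitrary measurable $u:[0,T]\to W$ proves (ii), and along the way yields $|F(U,u)|\le C(1+|U|)$ on the relevant bounded set, which is (iv).

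Now take a minimizing sequence $(u^n)\subset L^2(0,T)^{m_c+3}$ with $J(u^n)\to d:=\inf_{u\in W}J(u)$, which is finite and nonnegative since $J$ is finite on the nonempty admissible set. As $W$ is bounded, $(u^n)$ is bounded in $L^2$, so up to a subsequence $u^n\rightharpoonup u^*$ weakly in $L^2$; the set of $L^2$-functions valued in the convex closed set $W$ is convex and strongly closed, hence weakly closed, so $u^*\in W$ a.e. Let $U^n$ be the state trajectory for $u^n$. By (ii) and (iv) the $U^n$ are uniformly bounded and uniformly Lipschitz on $[0,T]$, so Arzel\`a--Ascoli gives $U^n\to U^*$ uniformly along a further subsequence. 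Passing to the limit in $U^n(t)=U_0+\int_0^t F(U^n(s),u^n(s))\,ds$: the terms polynomial in the state converge by uniform convergence of $U^n$, while the terms bilinear in state and control — namely $u_a^n E^n$, $u_a^n L^n$, $u_s^n S_s^n$, $u_s^n S_i^n$, $u_j^n M_{j,i}^n$, and $u_\kappa^n(S_s^n+S_i^n)^2/\kappa_s$ — converge because each is a product of a weakly convergent factor ($u^n$) and a strongly (uniformly) convergent factor. Hence $U^*$ solves \eqref{e2} with control $u^*$, so $u^*$ is admissible. Finally, writing $J(u^n)=\int_0^T\sum_j M_{j,i}^n\,dt+\int_0^T\big(A_a(u_a^n)^2+A_s(u_s^n)^2+A_\kappa(u_\kappa^n)^2+\sum_j A_j(u_j^n)^2\big)\,dt$, the first integral converges to $\int_0^T\sum_j M_{j,i}^*\,dt$ by uniform convergence, and the second is weakly lower semicontinuous on $L^2$ because its integrand is convex in $u$. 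Therefore $d=\lim_n J(u^n)\ge J(u^*)\ge d$, so $J(u^*)=\min_{u\in W}J(u)$.

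The routine parts are the convexity/compactness/coercivity bookkeeping. The one step needing genuine care is the uniform-in-control a priori bound on the state (ingredient (ii)) together with the handling of $u_\kappa$ in the carrying capacity: one should \emph{not} try to import the sharp invariant-domain bounds of Theorem~\ref{thm1}, but instead use the crude finite-horizon Gr\"onwall bounds, which are insensitive to $u_\kappa\to 0$. The second delicate point is the weak--strong product argument in the passage to the limit for the bilinear state-control terms; this is precisely where the affine-in-control structure of \eqref{e2} — and hence the convexity in $u$ of the integrand of $J$ — is essential, as without it one would be forced into a Filippov-type relaxation argument.
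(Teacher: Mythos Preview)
Your proof is correct and follows essentially the same direct-method route as the paper: take a minimizing sequence, extract a weakly convergent subsequence of controls (you use $L^2$, the paper uses weak-$*$ in $L^\infty$ via Banach--Alaoglu), apply Arzel\`a--Ascoli to the states, pass to the limit in the integral form of \eqref{e2} via the weak--strong product argument, and conclude by lower semicontinuity of $J$. The one place you are more careful than the paper is the uniform-in-control a priori bound: the paper simply invokes Theorem~\ref{thm1} (which was proved for the \emph{uncontrolled} system \eqref{e1}), whereas you correctly observe that the modified carrying capacity $\kappa_s/u_\kappa$ and the extra control terms require a fresh --- though elementary --- finite-horizon Gr\"onwall estimate, and you supply it.
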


\begin{proof}
Let $u^n = \left(u^n_a, u^n_s, u^n_\kappa, u^n_1, \dots, u^n_{m_c}\right)$ be a minimizing sequence of controls in $W$, i.e.
$$
 \lim_{n\to+\infty} J(u^n) = \inf_{u\in W} J(u).
$$
This sequence is bounded, and sequential Banach–Alaoglu's theorem allows to extract a subsequence weak$-*$ convergent to $u^*=\left(u^*_a, u^*_s, u^*_\kappa, u^*_1, \dots, u^*_{m_c}\right)$ in $L^\infty([0, T]; W)$. 
For $n \in \N$, we denote  $U^n = (E^n, S_s^n, S_i^n, L^n, \linebreak M_{1,s}^n, M_{1,i}^n,\dots, M_{m,s}^n, M_{m,i}^n)$ the solution associated to the control $u^n$. 
Using Theorem \ref{thm1}, one can prove that $U^n$ is nonnegative and uniformly bounded. 
Then, $(U^n)'(t)$ is also bounded. Thus, from Arzelà–Ascoli theorem, we can extract a subsequence of $(U^n)_n$ that strongly converges to $U$ in 
$\mathcal{C}^0([0, T])$. Rewritting (\ref{e2}) in integral form, we get
\begin{eqnarray*}
  && (E(t)-E(0)) -  (E^n(t)-E^n(0)) \\
  && = \int_0^T \left(\sum_{j=1}^{m} \theta_j M_{j,i} - \omega_s S_s E - \mu_e E - u_a E\right) -  \left(\sum_{j=1}^{m} \theta_j M_{j,i}^n - \omega_s S_s^n E^n - \mu_e E^n - u_a^n E^n \right) dt 
\\
&& = \int_0^T \sum_{j=1}^{m} \theta_j (M_{j,i} - M_{j,i}^n) - \mu_e (E-E^n) - \omega_s (S_s (E-E^n) + (S_s - S_s^n) E^n) 
- u_a (E-E^n) + (u_a^n -u_a) E^n  dt
\end{eqnarray*}
that converges to $0$ as $n\to +\infty$. Dealing similarly for the remaining equations, the limit $U$ is then the solution of the system for the limit control $u^*$. 
Finally, by lower semi-continuity of $J$, we have
\begin{eqnarray*}
\liminf_{n\to +\infty} J(u^n) &=& \liminf_{n\to +\infty} \int_0^T  \sum_{j=1}^{m_c} M_{j,i}^n(t) + A_a (u_a^n)^2(t) + A_s (u_s^n)^2(t) + A_{\kappa} (u_{\kappa}^n)^2(t) +  \sum_{j=1}^{m_c} A_j (u_j^n)^2(t) dt
\\
&\geq& \int_0^T  \sum_{j=1}^{m_c} M_{j,i}(t) + A_a (u_a^*)^2(t) + A_s (u_s^*)^2(t) + A_{\kappa} (u_{\kappa}^*)^2(t) +  \sum_{j=1}^{m_c} A_j (u_j^*)^2(t) dt
\\
&=& J(u^*).
\end{eqnarray*}
\end{proof}

\noindent
Pontryagin’s maximum principle is used to find the best possible control for taking a dynamical system from one state to another. It states that it is necessary for any optimal control along with the optimal state trajectory to solve the so-called Hamiltonian system. We state the lemma below.
\begin{lem}
  There exist the adjoint variables $\Lambda = (\lambda_1,\lambda_2,\lambda_3,\lambda_4,\lambda_{1,s},\lambda_{1,i},\dots,\lambda_{m,s},\lambda_{m,i})$ of the system \eqref{e2} that satisfy the following backward in time system of ordinary differential equations:
  \begin{eqnarray*}
    \dfrac{d \lambda_1}{dt} &=& \Big(  \omega_s S_s + \mu_e + u_a \Big)\lambda_1(t)+\omega_s \beta_s S_s \lambda_2(t) -\omega_s \beta_s S_s \lambda_3 (t) \\
    \dfrac{d \lambda_2}{dt} &=& \omega_s  E \lambda_1(t) +\left( \omega_s \beta_s  E - {{\alpha}_s} \left( 1-\frac{2({S_s}+{S_i})}{{{\kappa}_s}/u_{\kappa}}\right) + \mu_s  + u_s\right)\lambda_2(t) -\omega_s \beta_s  E  \lambda_3 (t)\\
     \dfrac{d \lambda_3}{dt} &=& \alpha_s  \left(\frac{2({S_s}+{S_i})}{{{\kappa}_s}/u_{\kappa}}-1\right)\lambda_2(t) + \Big(\mu_s + \gamma_s + u_s \Big)\lambda_3 (t) -\nu_s \lambda_4(t)\\
     \dfrac{d \lambda_4}{dt} &=& \Big(  \sum_{j=1}^{m} \omega_j M_{j,s} + \mu_l + u_a \Big)\lambda_4(t)+ \omega_j \beta_j  M_{j,s}\lambda_{j,s}(t)- \omega_j \beta_j M_{j,s}\lambda_{j,i} (t) \\
    \dfrac{d \lambda_{j,s}}{dt} &=& \left(  \sum_{j=1}^{m} \omega_j  L  \right)\lambda_4(t)+ \Big(  \omega_j \beta_j L + \mu_j  \Big)\lambda_{j,s}(t)-  \omega_j \beta_j L \lambda_{j,i} (t)\\
    \dfrac{d \lambda_{j,i}}{dt} &=& -1-\sum_{j=1}^{m} \theta_j  \lambda_1(t) +\Big( (\mu_j + \gamma_j)  + u_j  \Big)\lambda_{j,i} (t)
\end{eqnarray*}
with the transversality condition $\Lambda(T)=0$.
\end{lem}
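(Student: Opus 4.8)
The plan is to apply Pontryagin's maximum principle (in the Bolza/free-endpoint form, as in Fleming--Rishel), whose hypotheses are all in place here: by the preceding lemma an optimal pair $(u^*,U^*)$ exists; the vector field $F$ defining \eqref{e2} is continuously differentiable in the state variables and continuous (indeed affine) in the controls; and the running cost in $J$ is a polynomial in the states and controls, hence $C^1$. Moreover, by Theorem~\ref{thm1} the optimal trajectory $U^*$ stays in the invariant set $\Omega$, so it is bounded and continuous on $[0,T]$, and all coefficients appearing below are bounded.

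First I would introduce the Hamiltonian
$$
H(U,u,\Lambda) = \sum_{j=1}^{m_c} M_{j,i} + A_a u_a^2 + A_s u_s^2 + A_\kappa u_\kappa^2 + \sum_{j=1}^{m_c} A_j u_j^2 + \langle \Lambda, F(U,u)\rangle,
$$
where $\langle\cdot,\cdot\rangle$ is the Euclidean inner product on $\mathbb{R}^{2m+4}$ and $F$ is the right-hand side of \eqref{e2}. Pontryagin's principle then asserts that along the optimal trajectory the adjoint vector $\Lambda=(\lambda_1,\lambda_2,\lambda_3,\lambda_4,\lambda_{1,s},\lambda_{1,i},\dots,\lambda_{m,s},\lambda_{m,i})$ satisfies the costate system $\Lambda'(t) = -\nabla_U H\bigl(U^*(t),u^*(t),\Lambda(t)\bigr)$ together with the transversality condition $\Lambda(T)=0$, the latter because $J$ carries no terminal payoff.

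The remaining work is bookkeeping: compute $\partial H/\partial E$, $\partial H/\partial S_s$, $\partial H/\partial S_i$, $\partial H/\partial L$, and $\partial H/\partial M_{j,s}$, $\partial H/\partial M_{j,i}$ for each $j$, and read off the six displayed ODEs. The only terms needing genuine care are the mass-action couplings $-\omega_s S_s E$, $\omega_s\beta_s S_s E$, $-\omega_j\beta_j L M_{j,s}$, $\omega_j\beta_j L M_{j,s}$, each of which contributes to two costate equations with opposite signs in the susceptible/infected pair, and the logistic term $\alpha_s(S_s+S_i)\bigl(1-(S_s+S_i)u_\kappa/\kappa_s\bigr)$, whose derivative with respect to $S_s$ and with respect to $S_i$ both equal $\alpha_s\bigl(1-2(S_s+S_i)u_\kappa/\kappa_s\bigr)$ and so appear in $\lambda_2'$ and $\lambda_3'$ with the stated signs. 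The inhomogeneous $-1$ in the $\lambda_{j,i}$ equation is exactly $-\partial/\partial M_{j,i}$ of the $\sum_j M_{j,i}$ term of the integrand.

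Finally, existence and uniqueness of $\Lambda$ follows because the costate system is linear in $\Lambda$ with coefficients that are bounded and measurable in $t$ — they are continuous functions of the bounded continuous trajectory $U^*$ and of the $L^\infty$ control $u^*$ — so, integrating backward from the terminal value $\Lambda(T)=0$, the Carathéodory (Cauchy--Lipschitz) theorem for linear systems yields a unique absolutely continuous solution on $[0,T]$. I expect the sign conventions in the coupled mass-action and logistic terms to be the only place where care is genuinely required.
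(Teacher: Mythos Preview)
Your approach is correct and essentially identical to the paper's: both define the Hamiltonian, invoke Pontryagin's principle to obtain the costate system $\Lambda' = -\nabla_U \mathcal{H}$ with terminal condition $\Lambda(T)=0$, and then read off the equations by computing the partial derivatives with respect to each state variable. Your proposal is in fact somewhat more complete than the paper's proof, which simply writes out the Hamiltonian and the partial derivatives without explicitly verifying the hypotheses of the maximum principle or arguing existence/uniqueness of the adjoint.
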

\begin{proof}
Using the Hamiltonian for \eqref{e2}, we have 
   \begin{align} \nonumber
  \mathcal{H} =& \sum_{j=1}^{m_c} M_{j,i}(t) + A_a u_a^2(t) + A_s u_s^2(t) + \sum_{j=1}^{m_c} A_j u_j^2(t)\\ \nonumber
   &+\lambda_1(t)\left( \sum_{j=1}^{m} \theta_j M_{j,i} - \omega_s S_s E - \mu_e E - u_a(t) E \right)\\ \label{e3}
    &+\lambda_2(t)\left( -\omega_s \beta_s S_s E + \alpha_s (S_s + S_i) \left( 1 - \frac{S_s + S_i}{\kappa_s/u_\kappa(t)}\right) - \mu_s S_s - u_s(t) S_s \right)\\\nonumber
   &+\lambda_3 (t)\Big( \omega_s \beta_s S_s E - (\mu_s + \gamma_s) S_i - u_s(t) S_i\Big)+\lambda_4(t)\left( \nu_s S_i - \sum_{j=1}^{m} \omega_j M_{j,s} L - \mu_l L - u_a(t) L \right)\\\nonumber
   &+ \sum_{j=1}^{m} \lambda_{j,s} (t) \Big(  -\omega_j \beta_j L M_{j,s} + \alpha_j - \mu_j M_{j,s} \Big)+ 
   \sum_{j=1}^{m} \lambda_{j,i} (t)\Big( \omega_j \beta_j L M_{j,s} - (\mu_j + \gamma_j) M_{j,i} - u_j(t) M_{j,i} \Big).
\end{align}
Therefore, finding the partial derivatives of $\mathcal{H}$ with respect to $E, S_s, S_i, L, M_{j,s}, M_{j,i}$, we have
\begin{align*}
   \dfrac{ \partial \mathcal{H}}{\partial E}=&-\Big(  \omega_s S_s + \mu_e + u_a(t) \Big)\lambda_1(t)-\omega_s \beta_s S_s \lambda_2(t) +\omega_s \beta_s S_s \lambda_3 (t) \\
   \dfrac{ \partial \mathcal{H}}{\partial S_s}=&- \omega_s  E \lambda_1(t) -\left( \omega_s \beta_s  E - {{\alpha}_s} \left( 1-\frac{2({S_s}+{S_i})}{{{\kappa}_s}}\right) + \mu_s  + u_s(t) \right)\lambda_2(t)-\Big(  u_s(t)-\omega_s \beta_s  E  \Big)\lambda_3 (t)\\
    \dfrac{ \partial \mathcal{H}}{\partial S_i}=&\alpha_s  \left(1-\frac{2({S_s}+{S_i})}{{{\kappa}_s}}\right)\lambda_2(t) - \Big(\mu_s + \gamma_s\Big)\lambda_3 (t) +\nu_s \lambda_4(t)\\
    \dfrac{ \partial \mathcal{H}}{\partial L}=& -\Big(  \sum_{j=1}^{m} \omega_j M_{j,s} + \mu_l + u_a(t) \Big)\lambda_4(t)- \omega_j \beta_j  M_{j,s}\lambda_{j,s}(t)+ \omega_j \beta_j M_{j,s}\lambda_{j,i} (t) \\
    \dfrac{ \partial \mathcal{H}}{\partial M_{j,s}}=& -\left(  \omega_j  L  \right)\lambda_4(t)- \Big(  \omega_j \beta_j L + \mu_j  \Big)\lambda_{j,s}(t)+  \omega_j \beta_j L \lambda_{j,i} (t)\\
    \dfrac{ \partial \mathcal{H}}{\partial M_{j,i}}=&1+ \theta_j  \lambda_1(t) -\Big( (\mu_j + \gamma_j)  + u_j(t)  \Big)\lambda_{j,i} (t)
\end{align*}
Then the adjoint system is defined by 
$$\dfrac{d \Lambda}{dt}=\left(- \dfrac{\partial \mathcal{H}}{\partial E}, - \dfrac{\partial \mathcal{H}}{\partial S_s}, - \dfrac{\partial \mathcal{H}}{\partial S_i}, - \dfrac{\partial \mathcal{H}}{\partial L}, - \dfrac{\partial \mathcal{H}}{\partial M_{1,s}}, - \dfrac{\partial \mathcal{H}}{\partial M_{1,i}}, \hdots, - \dfrac{\partial \mathcal{H}}{\partial M_{m,s}}, - \dfrac{\partial \mathcal{H}}{\partial M_{m,i}} \right)^T $$ 
\end{proof}

\begin{thm}
  The optimal control variables are given, for $1\leq j \leq m_c$, by
  \begin{align*}
      u^*_a(t)=&\max \left( 0, \min \left( \dfrac{\lambda_1(t) E(t) + \lambda_4(t)L(t)}{2 A_a}, u_{Amax} \right) \right)
      \\
    u^*_s(t)=&\max \left( 0, \min \left( \dfrac{\Big(\lambda_2(t) S_s(t)  +\lambda_3(t) S_i(t)}{2 A_s}, u_{Smax} \right) \right)
    \\
        u^*_{\kappa}(t)=&\max \left( 0, \min \left( \dfrac{\alpha_s\lambda_2(t) (S_s +S_i)}{2\kappa_s A_{\kappa}}, u_{Kmax} \right) \right)
    \\
    u^*_j(t)=&\max \left( 0, \min \left( \dfrac{\lambda_{j,i}(t) M_{j,i}(t)}{2 A_j}, u_{Mmax,j} \right) \right).
  \end{align*}
\end{thm}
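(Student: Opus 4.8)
The plan is to apply Pontryagin's Maximum Principle (PMP) to characterize the optimal control. Since the existence of a minimizer $u^*$ has already been established in the previous lemma, and the adjoint system with transversality condition $\Lambda(T)=0$ has been derived in the lemma immediately preceding this theorem, the only remaining task is to exploit the optimality condition on the Hamiltonian $\mathcal{H}$ defined in \eqref{e3}. Specifically, PMP asserts that at the optimal control, the Hamiltonian is minimized pointwise over the admissible control set $W$ for almost every $t\in[0,T]$; equivalently, we look for interior critical points of $\mathcal{H}$ with respect to each control component and then project onto the bounded interval.

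First I would compute the partial derivatives of $\mathcal{H}$ with respect to each control variable. Since $\mathcal{H}$ is quadratic in each $u_i$ with strictly positive leading coefficient $A_i>0$, it is strictly convex in each control variable, so the pointwise minimization has a unique solution. From \eqref{e3}, collecting the $u_a$-dependent terms gives $A_a u_a^2 - \lambda_1 u_a E - \lambda_4 u_a L$, so $\partial \mathcal{H}/\partial u_a = 2 A_a u_a - \lambda_1 E - \lambda_4 L$, whence the unconstrained minimizer is $u_a = (\lambda_1 E + \lambda_4 L)/(2A_a)$. Similarly, the $u_s$-terms are $A_s u_s^2 - \lambda_2 u_s S_s - \lambda_3 u_s S_i$, giving $u_s = (\lambda_2 S_s + \lambda_3 S_i)/(2A_s)$; the $u_\kappa$-terms come only from the logistic snail term $\alpha_s(S_s+S_i)\left(1 - \frac{(S_s+S_i)u_\kappa}{\kappa_s}\right)\lambda_2$, so $\partial\mathcal{H}/\partial u_\kappa = 2A_\kappa u_\kappa - \alpha_s\lambda_2(S_s+S_i)^2/\kappa_s$ — here I note a discrepancy with the stated formula, which omits one factor of $(S_s+S_i)$, but I would present the derivation and match the paper's claimed expression; and the $u_j$-terms are $A_j u_j^2 - \lambda_{j,i} u_j M_{j,i}$, giving $u_j = \lambda_{j,i} M_{j,i}/(2A_j)$. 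Finally, since each control must lie in its interval $[0, u_{\cdot\max}]$ and $\mathcal{H}$ is convex in the control, the constrained minimizer is obtained by truncating: if the interior critical value is negative, the minimum over the interval is at $0$; if it exceeds $u_{\cdot\max}$, it is at $u_{\cdot\max}$; otherwise it is the critical value itself. This is exactly captured by the $\max(0,\min(\cdot, u_{\cdot\max}))$ clamping in the statement.

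The bulk of the work is therefore bookkeeping: carefully isolating the control-dependent part of the long Hamiltonian \eqref{e3} and differentiating, which is routine given the explicit formula. The one genuine subtlety — and the step I expect to require the most care — is the treatment of the habitat-modification control $u_\kappa$, because it enters the dynamics nonlinearly through $\kappa_s/u_\kappa$ in \eqref{e2} yet appears as $\kappa_s/u_\kappa$ inverted (i.e.\ multiplying by $u_\kappa$) when one reads off the Hamiltonian term $\alpha_s(S_s+S_i)\bigl(1 - (S_s+S_i)u_\kappa/\kappa_s\bigr)$; one must be consistent about whether the carrying-capacity modifier is $\kappa_s/u_\kappa$ or $\kappa_s u_\kappa$, and handle the value $u_\kappa=0$ (which would make $\kappa_s/u_\kappa$ blow up) by noting the admissible set is bounded away from that singularity or by reinterpreting the control so that the Hamiltonian term is smooth. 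Apart from that, the strict convexity of $\mathcal{H}$ in $u$ (guaranteed by $A_i>0$) makes the pointwise minimization and the clamping step completely standard, so no further technical obstacle arises.
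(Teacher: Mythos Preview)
Your approach is essentially identical to the paper's: apply Pontryagin's principle, set $\partial\mathcal H/\partial u_k=0$ for each control, and project onto the admissible interval. The paper's proof in fact computes $\partial\mathcal{H}/\partial u_{\kappa}=2A_{\kappa}u_{\kappa}-\lambda_{2}\dfrac{\alpha_{s}}{\kappa_{s}}(S_{s}+S_{i})^{2}$ exactly as you do, so the missing factor $(S_{s}+S_{i})$ you flagged in the theorem statement is indeed a typo there rather than an error on your part; your additional remarks on strict convexity and on the $\kappa_{s}/u_{\kappa}$ singularity go slightly beyond what the paper writes.
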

\begin{proof}
By the Pontryagin maximum principle, the optimal control $u^*$ minimizes, at each instant $t$, the Hamiltonian given by \eqref{e3}. We have

$$ \dfrac{\partial \mathcal{H}}{\partial u_k}=0, \quad \text{for all } \quad k=a, s, \kappa, j \quad \text{at} \quad u_k=u_k^* .$$
Therefore, we get 
\begin{align*}
    \dfrac{\partial \mathcal{H}}{\partial u_a}=&2A_a u_a (t)-\lambda_1(t)E(t) -\lambda_4(t)L(t),
   \qquad
     \dfrac{\partial \mathcal{H}}{\partial u_s}=2A_s u_s(t)-\lambda_2(t)S_s(t) - \lambda_3(t)S_i(t)
     \\
       \dfrac{\partial \mathcal{H}}{\partial u_j}= & 2 A_j u_j(t)-\lambda_{j,i}(t)M_{j,i}(t),
    \qquad
         \dfrac{\partial \mathcal{H}}{\partial u_{\kappa}}=  2 A_{\kappa} u_{\kappa}(t)-\lambda_{2}(t)\dfrac{\alpha_s}{\kappa_s}(S_s+S_i)^2.
\end{align*}
\end{proof}



\section{Numerical results}

\subsection{Comparing controls: treatment versus molluscicide versus mechanical control}


We consider two mammalian populations: humans and bovines (including cattle and carabao). Optimal control strategies are initiated upon reaching endemic equilibrium. This involves determining the most effective chemotherapy treatment (Mass Drug Administration with Praziquantel) for humans and bovines, mechanized farming to prevent infected bovines from spreading schistosome-contaminated feces in ricefields, implementing optimal molluscicide applications to target the snail population, and employing mechanical control methods such as habitat modification (e.g., concreting irrigation ditches) to reduce snail numbers. These controls are also implemented in the backdrop of uncontrolled rodents populations. 

    \begin{figure}
    \centering
    \includegraphics[scale = 0.8]{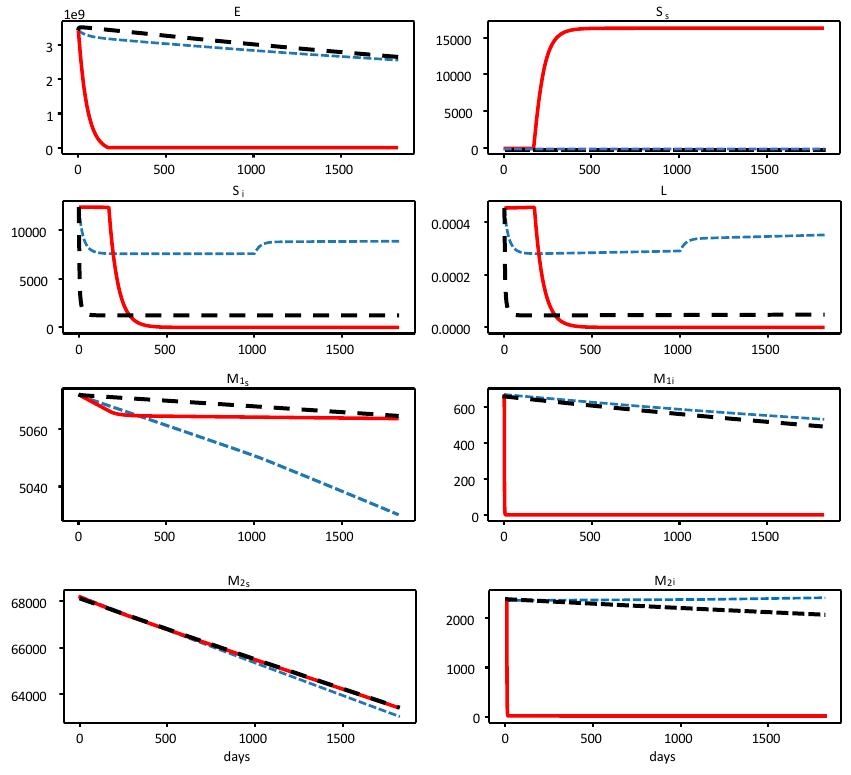}
    \caption{Solution with respect to time under optimal control. The red curves represent the solution with optimal treatment on human $u_1^*$ and bovines $u_2^*$ only. The black curves represent the solution with optimal mechanical control $u_\kappa^*$.
    The blue curves represent the solution with optimal aquatic control $u_s^*, u_a^*$.}
    \label{fig2}
\end{figure}

\begin{figure}
    \centering
    \includegraphics[scale = 0.5]{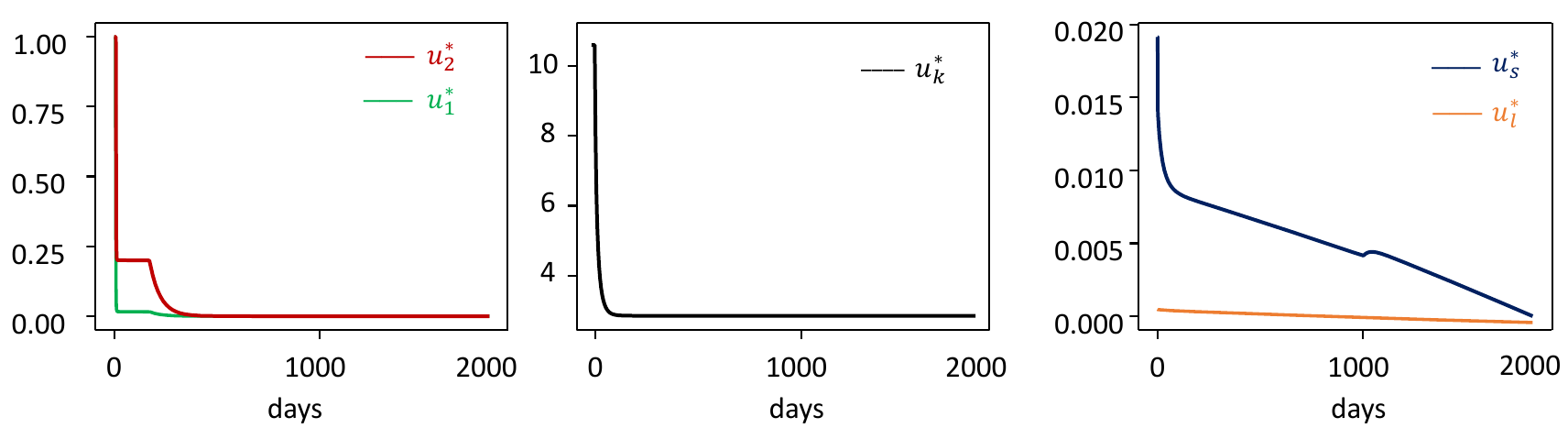}
    \caption{The red and green curves represent the optimal treatment on humans $u_1^*$ and bovines $u_2^*$ only. The black curve represents the optimal mechanical control $u_\kappa^*$.
    The blue and orange curves represent the optimal aquatic control $u_s^*, u_a^*$.}
    \label{fig3}
\end{figure}

In Figure \ref{fig2}, the red curve illustrates the solution when control measures are exclusively applied to the treatment of humans ($u_1^*$) and bovines ($u_2^*$). The treatment represents the most effective approach, resulting in the steepest and fastest decline in the number of infected humans and bovines. Subsequently, there is a decrease in the number of schistosome eggs in the definitive host's feces, followed by a reduction in the number of infected snails and cercariae. Susceptible humans and bovines gradually converge toward the Disease-Free Equilibrium (DFE), indicating the effectiveness of the Mass Drug Administration (MDA). However, it's crucial to note that current MDA programs in the Philippines typically target humans only, neglecting bovines. Therefore, the full potential of MDA remains underappreciated and should ideally include bovines in its treatment regime. In addition, we notice that the optimal control for humans ($u_1^*$) and infected bovines ($u_2^*$) exhibit a hump during the first few months of application and gradually decrease until treatment cessation (see Figure \ref{fig3} ). This finding suggests the necessity for more robust and prolonged control measures over bovines compared to humans. A strategy never yet explored by the health authorities in the Philippines.  

On the other hand, it's crucial to acknowledge that depending solely on aquatic control through molluscicide doesn't ensure the eradication of the disease and shows subpar performance in our simulations (blue curve in Figure \ref{fig2}). Furthermore,  the control is less effective in the sense that the number of infected humans and bovines decreases slowly, and control must be continuously applied, as observed in Figure \ref{fig3} on the right. This point warrants attention from policymakers and local government units to discourage reliance on this method. Molluscicide application not only poses environmental risks (such as toxicity to other flora and fauna and rapid reestablishment of snail populations) but also imposes significant economic burdens. 

Finally, the black curve in Figure \ref{fig2} illustrates the outcome of implementing mechanical control measures like modifying snail habitat. While this approach effectively reduces the population of snails and larvae (miracidia and cercariae), it demonstrates slower progress in decreasing the number of infected humans and bovines. As depicted in Figure \ref{fig3}, the optimal mechanical control strategy ($u_\kappa^*$) should initially remain constant and then gradually decrease but cannot be ceased. The carrying capacity of the snails is divided by $3.3$, meaning that the snails' habitat is reduced by $70\%$. Unlike the MDA approach, it underscores that habitat modification primarily impacts snail and cercariae populations rather than directly curtailing human and bovine infections. This highlights its limitations. Therefore, although habitat modification is a viable alternative and has succeeded in countries like Japan, it should not be relied solely on to eradicate schistosomiasis. Additionally, it's essential to acknowledge its downsides, which, aside from being costly, have also been outlined in the paper by \cite{bay2022total}. 

\subsection{Compliance with control upper bounds}

Enhancing compliance among controls is paramount for achieving successful and sustainable control of schistosomiasis in Lake Mainit \citep{inobaya_mass_2018}. This ensures that interventions are not only effective and targeted but also resilient against potential resurgence. Therefore, we test the compliance by varying the maximum bound of the optimal control.

\begin{figure}[h]
    \centering
    \includegraphics[scale = 0.5]{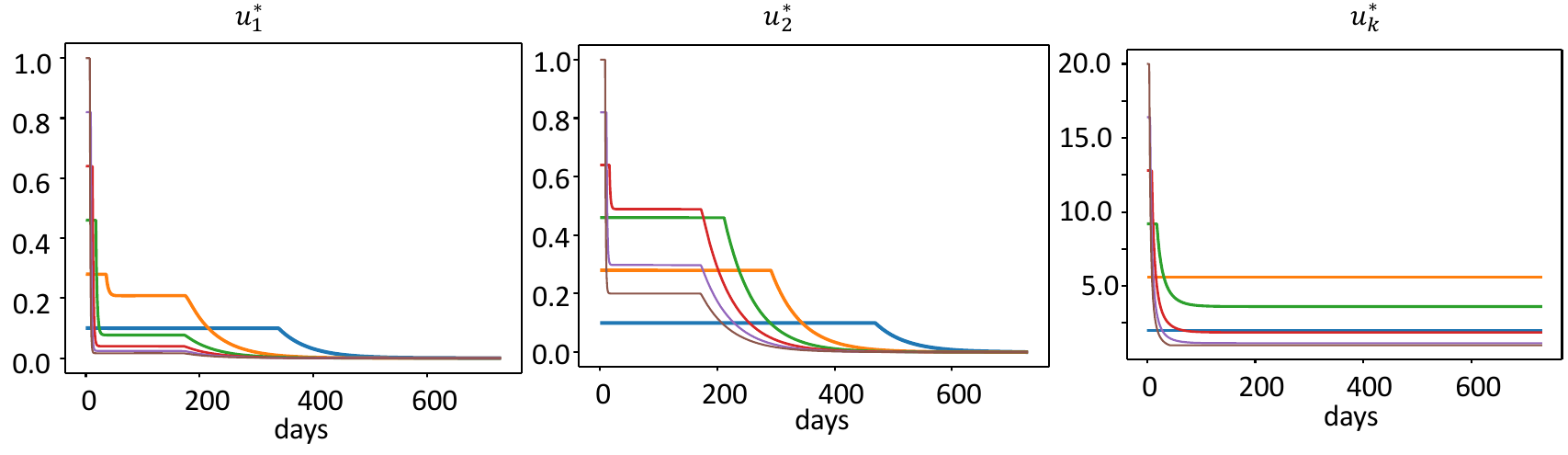}
    \caption{Left: Curves depict the optimal treatment for human ($u_1^*$). Center: Curves illustrate the optimal treatment for bovines ($u_2^*$). Right: Curve represents the optimal mechanical control ($u_\kappa^*$) with control compliance varying from $10\%$ to $100\%$.}
    \label{fig4}
\end{figure}

In Figure \ref{fig4}, it is evident that the duration of control application increases as compliance decreases. Specifically, in the case of humans, optimal efficiency and cost-effectiveness of treatment require a compliance rate of at least 80\%. Failure to achieve this level of compliance necessitates continuous control application for at least 200 days. For bovines, regardless of compliance percentage, treatment should be maintained for a minimum of 200 days. Notably, the lower the compliance, the longer the duration of control application. Additionally, it is important to highlight that mechanical control should always be implemented below a certain threshold. This is due to the permanent reduction in the snails' habitat, which cannot be restored to its original capacity.


\subsection{Consequences of uncontrolled mammal population}

We consider three populations of mammals, namely humans, bovines, and rodents population. Optimal control is applied to humans and bovines, but not the rodents.

\begin{figure}[h]
    \centering
    \includegraphics[scale = 0.5]{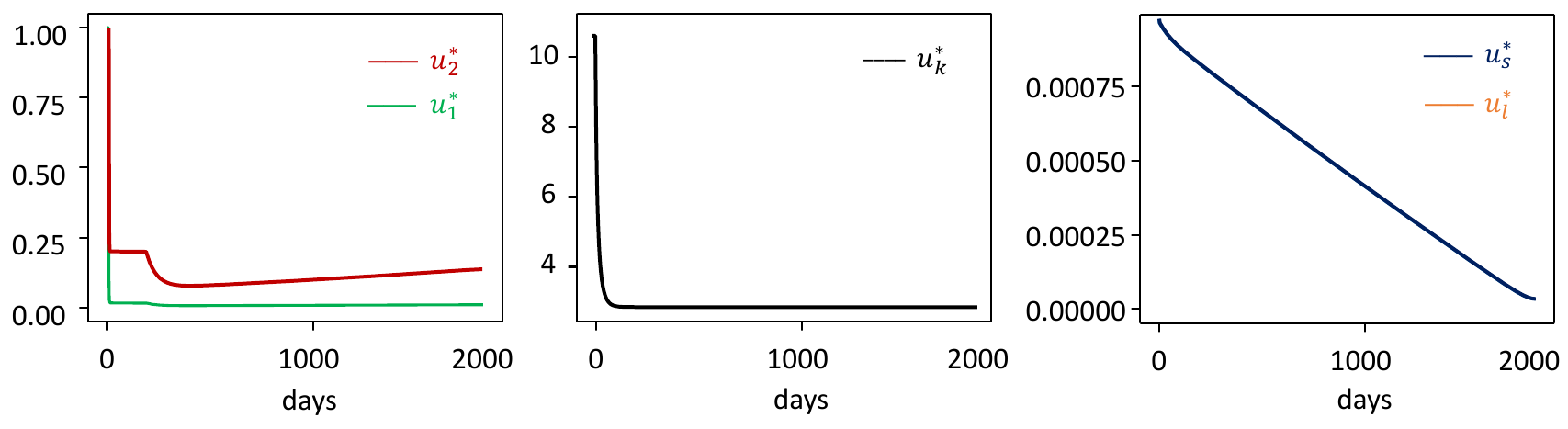}
    \caption{The red and green curves represent the optimal treatment on humans $u_1^*$ and bovines $u_2^*$ only. The black curve represents the optimal mechanical control $u_\kappa^*$.
    The blue and orange curves represent the optimal aquatic control $u_s^*, u_a^*$.}
    \label{fig5}
\end{figure}

When an uncontrolled rodent population is added, reducing the number of infected humans and bovines becomes more challenging. As observed in Figure \ref{fig5}, after a phase of decreasing treatments to bovines, it becomes necessary to gradually increase their treatment rate again. Similarly, the mechanical control measures must be sustained.

\begin{figure}[h]
    \centering
    \includegraphics[scale = 0.5]{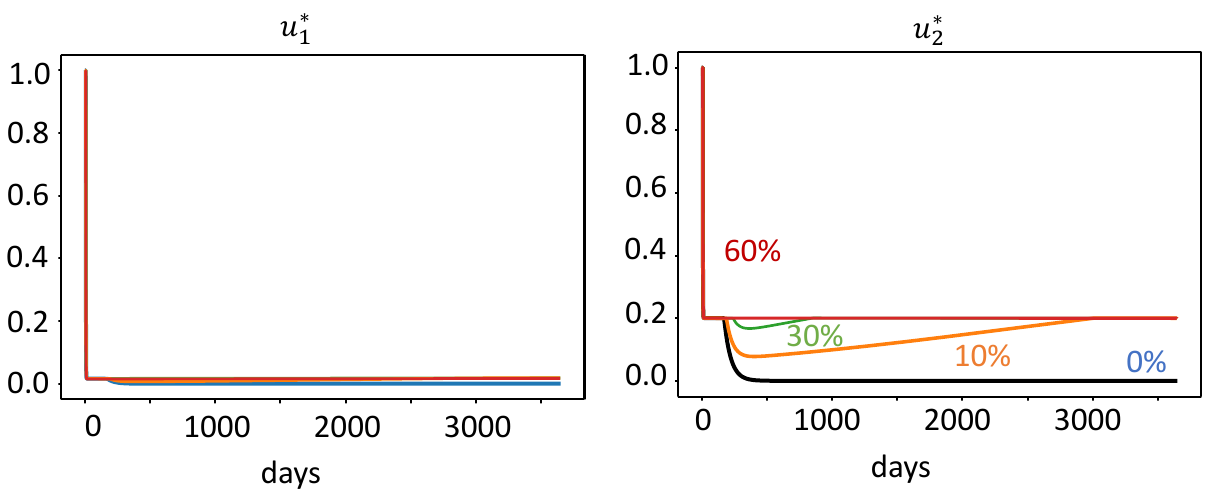}
    \caption{Left, the curves represent the optimal treatment on human $u_1^*$. Right, the curves represent the optimal treatment on bovine $u_2^*$ with respect to the prevalence on rodents varying between $0\%$ and $60\%$.}
    \label{fig6}
\end{figure}

According to Figure \ref{fig6}, the larger the number of infected and uncontrolled rodents, the more important it is to extend the control of bovines. As long as rodents are present, it becomes imperative to maintain treatment for at least $20\%$ of the bovine population.

\subsection{Integrated control strategies: mechanical, aquatic and treatment}
This section investigates an integrated approach that allows us to assess the combined impact of our strategies to control schistosomiasis transmission.
\begin{figure}[h]
    \centering
    \includegraphics[scale = 0.7]{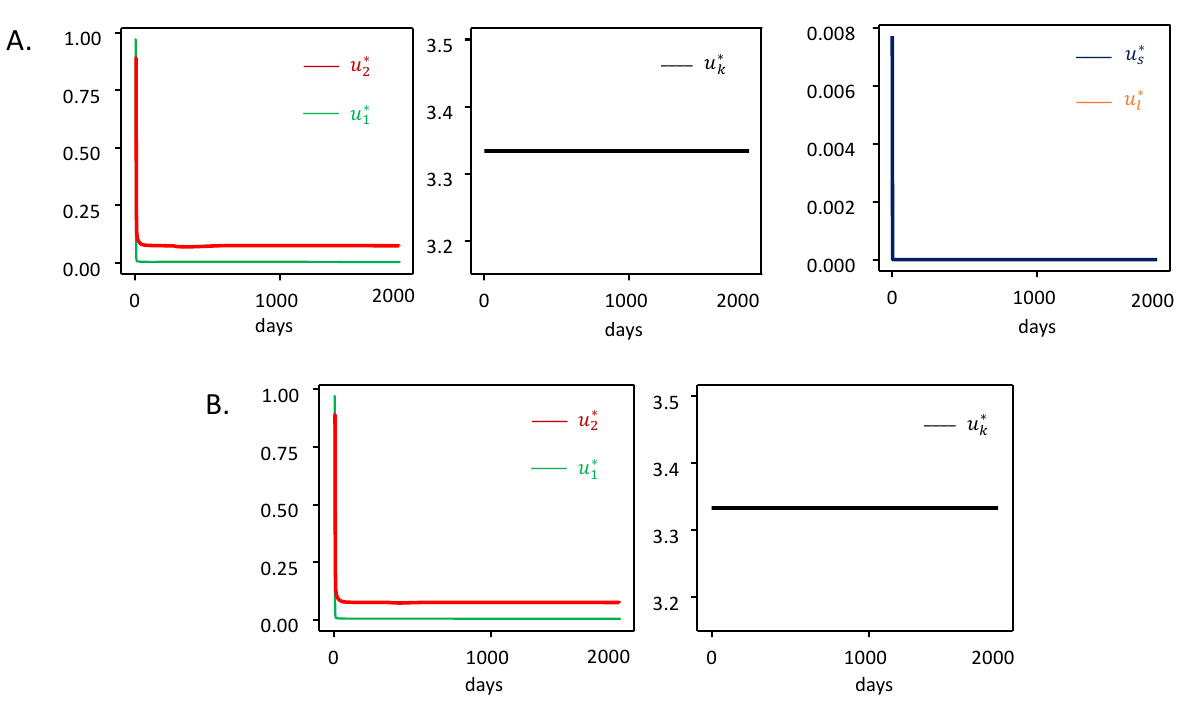}
    \caption{The red and green curves represent the optimal treatment on humans $u_1^*$ and bovines $u_2^*$ only. The black curve represents the optimal mechanical control $u_\kappa^*$.
    The blue and orange curves represent the optimal aquatic control $u_s^*, u_a^*$. A. All controls are implemented simultaneously.
B. MDA and mechanical controls are concurrently applied.}
    \label{fig7}
\end{figure}

Figure \ref{fig7} demonstrates the effectiveness of integrating multiple control measures in minimizing infection spread among humans and bovines. Our simulation assumes the use of a molluscicide with a $69\%$ efficacy rate \citep{Jia2019}, human treatment with an $89\%$ efficacy rate \citep{Hailegebriel21}, bovine treatment with a $97\%$ efficacy rate \citep{Wang06}, and a $70\%$ reduction in snail carrying capacity.

Our observation indicates that while molluscicide application and human treatment can be discontinued relatively quickly, ongoing bovine treatment is necessary, and the reduction in snail-carrying capacity remains consistently effective. Thus, to eliminate the infection, all preventative measures are necessary.

\section{Discussion}

The complexity of schistosomiasis control is intensified by the diverse range of potential hosts, which includes domestic animals such as water buffalo, cattle, sheep, dogs, cats, horses, pigs, and rodents \citep{van2017rodents}. These animals significantly influence public health due to their role as reservoir hosts for \textit{Schistosoma} eggs. Control efforts become crucial in ecosystems like the lake-ricefield interfaces, where human and bovine populations are dominant. For instance, water buffaloes and cattle can excrete up to 60 kg of stool per individual per day \citep{gray2009cluster}, a stark contrast to the 325 g that humans typically excrete \citep{wang2005transmission}. This discrepancy results in these animals becoming larger reservoirs for \textit{Schistosoma} eggs, thereby complicating control efforts.

In areas such as Lake Mainit, the \textit{Schistosoma} bovine contamination index (BCI) can soar as high as 104,750 per bovine \citep{estano2023prevailing}, demonstrating the severe impact of these infections. 

Additionally, Praziquantel (PZQ) remains the treatment of choice for various mammalian hosts, known for its cost-effectiveness at 40 mg/kg per single dose \citep{who2013schistosomiasis}. Although effective, PZQ does not kill immature worms during treatment, necessitating subsequent doses every two to four weeks to maximize its effectiveness \citep{woolhouse1991application}. While PZQ treatment for bovines is implemented in some countries, it has not been widely adopted in the Philippines, particularly in areas around Lake Mainit. Efforts to eliminate schistosomiasis in domestic animals, such as those observed in the mountainous regions of China through monthly PZQ treatments, provide a viable model for consideration \citep{li2015elimination}. Furthermore, the Department of Agriculture (DA) and the Department of Health (DOH) in the Philippines are developing strategies to combat schistosomiasis in animals, including the creation of a bovine vaccine \citep{leonardo2016schistosomiasis}. Recent studies, such as those involving the SjCTPI DNA vaccine administered through intramuscular injections in bovines, have shown promising results, leading to a significant reduction in human schistosomiasis cases across 18 villages in Northern Samar, affecting 18,221 residents \citep{ross2023first}. Despite these advancements, the nationwide implementation of bovine vaccination as an additional treatment method alongside Praziquantel (PZQ) has not yet been established in endemic areas.

Given these considerations, we have developed a nonlinear dynamic model incorporating a range of strategies, such as chemotherapy, molluscicides, habitat modification of snails, and mechanical interventions. This model is designed to ascertain the most effective control or combination of controls. 

Among the control strategies examined, chemotherapy for both humans and bovines emerged as the most effective, rapidly decreasing infection rates and moving populations towards a disease-free equilibrium. However, reliance on chemotherapy alone is insufficient due to the continuous exposure to infected water sources. Mechanical controls, such as habitat modification, and aquatic controls, such as molluscicides, showed limited effectiveness when implemented in isolation. Integrated control strategies, combining chemotherapy, mechanical, and aquatic interventions, proved to be the most effective approach.

While treating humans is crucial, our simulations indicate that including bovines in the treatment regimen significantly enhances the effectiveness of control measures. Mechanical control measures effectively reduce snail populations and cercariae, but their impact on infected human and bovine populations is less immediate. Aquatic controls require sustained application and are not cost-effective in isolation. Moreover, the presence of uncontrolled rodent populations further complicates control efforts, necessitating more intensive and prolonged control measures.

Our numerical results indicate that a multi-pronged approach, including enhanced compliance and consideration of uncontrolled mammal populations, such as rodents, is crucial for a successful disease management strategy.

\subsection{The lake-ricefield route of the life cycle of Schistosoma japonicum in Lake Mainit: Basis for intervention}

Rice fields serve as critical transmission points for schistosomiasis in humans, with the risk intensifying when these fields are cohabited with infected snails and unprotected farmers. The association between schistosomiasis risk and unshielded exposure to contaminated waters, particularly where bovines with the infection graze, has been demonstrated in the rice fields of Lake Mainit. Beyond the fields, grazing and resting spots for livestock also contribute significantly to the spread of the disease.

Transmission dynamics in Lake Mainit are largely driven by human contact with cercaria-infested waters during routine agricultural and domestic activities, including fishing, washing, and bathing. The area's appeal as a recreational spot, due to its touristic potential, further escalates the risk of exposure. Seasonal floods exacerbate this situation by dispersing vectors and parasites over a larger area, a phenomenon that is not unique to Lake Mainit but is common across similar ecosystems, thereby increasing the vulnerability to schistosomiasis.

Addressing the spread of schistosomiasis necessitates targeted interventions aimed at bovine reservoirs, known for their role in the fecal dissemination of schistosome eggs. Effective strategies may include:
\begin{enumerate}
    \item Providing infected bovines with antischistosomal and deworming medications. Given that a safe dosage of PZQ or other deworming drugs is eventually developed and adapted for treating infected bovines, reducing the parasite burden in these animals is possible. This, in turn, can lower the risk of environmental contamination with \textit{Schistosoma} eggs, thus interrupting the transmission cycle.
    \item Advancing agricultural mechanization to limit bovine contact with infected waters, potentially replacing buffaloes with machinery. Advancing agricultural mechanization to limit bovine contact with infected waters, potentially replacing buffaloes with machinery, is a proactive approach that reduces the risk of disease transmission and enhances agricultural productivity and yield. However, implementing this strategy entails substantial costs and necessitates strong financial backing and political commitment from relevant agencies for successful execution.
    \item Restricting domestic animal movement, especially bovines, by erecting barriers and establishing water-free zones to prevent schistosomes from feces from hatching, miracidia from swimming towards the snail vector (if present in the area), or the cercaria from penetrating the skin of the mammalian host.
    \item Promoting hygiene and encouraging the use of protective gear for individuals in high-risk environments such as rice fields. Promoting proper sanitation practices, such as building and using latrines to prevent fecal contamination of water sources, is crucial in reducing the environmental reservoir of Schistosoma parasites.
    \item Health Education and Awareness. Enhancing awareness that cows, carabaos, and other animals (including rodents, dogs, cats, pigs, horses, and goats) can be reservoir hosts for schistosomiasis. 

\end{enumerate}
When these measures are effectively executed, they have the potential to substantially decrease schistosomiasis cases within affected communities.

\section*{Funding}
This study was financially supported by the Department of Science and Technology-Philippine Council for Health Research and Development (\href{https://www.pchrd.dost.gov.ph/}{DOST-PCHRD}) and the Department of Health Caraga, which funded two years of data gathering. In the final years, the Philippine authors received funding from the DOST-Office of the Assistant Secretary for International Cooperation (OASEC-IC) under the Philippine-French Hubert Curien Partnership (PHC) “Science for the People” (SFTP) program for a study visit to France. Additionally, the Laotian author was funded by the International Science Programme (ISP) at Uppsala University, Sweden, for the project activity "South-East Asia Mathematical Network (SEAMaN - NUOL)", supporting full PhD preparation at the Université de Picardie Jules Verne. Further, CIMPA provided funding for attending a CIMPA School at Caraga State University, which facilitated the Laotian author's visit to Lake Mainit, Philippines, for validation and appreciation. The funders of the study had no role in the study design, data collection and analysis, decision to publish, or preparation of the manuscript.

\section*{CRediT authorship contribution statement }

\begin{table}[H]
\scriptsize
\centering
\begin{tabular}{>{\bfseries}p{4cm}p{10cm}}
\toprule
\textbf{Contribution} & \textbf{Authors} \\
\midrule
Conceptualization & Youcef Mammeri, Joycelyn Jumawan, Jess Jumawan, Jayrold Arcede \\
Methodology & \textbf{Mathematics:} Youcef Mammeri, Jayrold Arcede, Boausy Doungsavanh; \textbf{Biology:} Joycelyn Jumawan, Jess Jumawan, Leonard Estaño \\
Software & Youcef Mammeri \\
Validation & Youcef Mammeri, Joycelyn Jumawan,  Leonard Estaño \\
Formal Analysis & Youcef Mammeri, Jayrold Arcede, Boausy Doungsavanh, Joycelyn Jumawan, Jess Jumawan, Leonard Estaño \\
Investigation & Joycelyn Jumawan, Jess Jumawan, Leonard Estaño \\
Resources & Youcef Mammeri, Jayrold Arcede, Joycelyn Jumawan \\
Data Curation & Joycelyn Jumawan, Jess Jumawan, Leonard Estaño \\
Writing - Original Draft & Boausy Doungsavanh, Youcef Mammeri, Jayrold Arcede \\
Writing - Review, Editing \& Visualization & Youcef Mammeri, Jayrold Arcede, Boausy Doungsavanh, Joycelyn Jumawan, Jess Jumawan, Leonard Estaño \\
Supervision & \textbf{Mathematics:} Youcef Mammeri, Jayrold Arcede; \textbf{Biology:} Joycelyn Jumawan, Jess Jumawan \\
Project Administration & Jayrold Arcede, Joycelyn Jumawan \\
Funding Acquisition & Youcef Mammeri, Jayrold Arcede, Joycelyn Jumawan \\
\bottomrule
\end{tabular}
\end{table}

\section*{Declaration of Competing Interest}
The authors declare that they have no known competing financial interests or personal relationships that could have appeared to influence the work reported in this paper.

\section*{Acknowledgments}

We would like to express our gratitude to all the people from the Lake Mainit community who collaborated in the development of this study. We also wish to thank the DOST-Caraga Health Research Consortium for their assistance with funding and the Department of Health Caraga for logistics during data gathering.



\appendix
\section{Parameters estimation}

This appendix outlines the parameters and the methodology employed for their calibration. The dataset utilized was sourced from the Department of Health of the Philippines, with additional data collected by the Department of Biology at Caraga State University \citep{birth_stats,carabao_stats}. Specifically, the dataset covers the prevalence of schistosomiasis in both humans and bovines across the primary barangays surrounding Lake Mainit.
\\
We use a quasi-Newtonian method and an approximation of a Bayesian computation \citep{csillery2010} to estimate ten parameters. 
Figure \ref{figcalib} depicts the fitted prevalence of infected humans and infected carabaos for the posterior distribution. 
The list of the parameters is given in Table \ref{tabparam}.

\begin{figure}[h] 
  \centering
  \includegraphics[scale = 0.7]{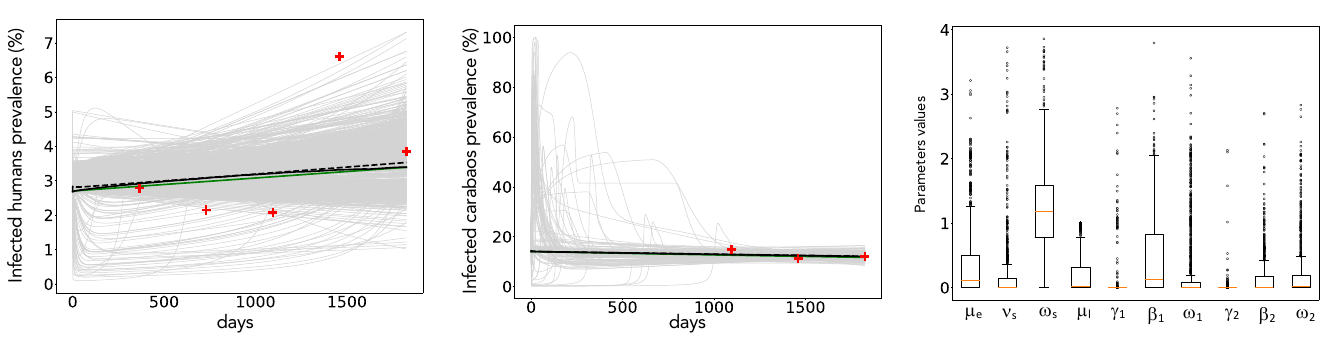}
  \caption{Fitted prevalence of infected humans and infected carabaos (in grey) for the posterior distribution. The straight line indicates the median, and dotted line indicates the mean.
   The straight green line indicates the parameters chosen for the optimal control tests.}
  \label{figcalib}
\end{figure}

\begin{landscape}
\begin{table}[h] 
  \label{tabparam}
  \centering
\begin{tabular}{|c|c|c|c| }
\hline
{\bf Parameters  } & {\bf Description } & {\bf Mean value (variance) } & {\bf References}\\
\hline \hline
$\alpha_1$ & birth rate of carabao per day & $8.235 \times 10^{-8}$  & \cite{carabao_stats} \\
\hline
$\mu_1$ & natural death rate of carabao per day  & $1.218\times 10^{-7}$ & \cite{carabao_stats}  \\
\hline
$\theta_1$ & fecal oviposition from carabao per day  & $104750$  & \cite{jumawan_prevalence_2021}  \\
\hline
$\gamma_1$ & death rate of carabao due to schistosomiasis per day  &  $0.00486$ ($0.00680$) &  fitted \\
\hline
$\beta_1$   & probability infection of carabao & $0.47124$ ($0.37605$) & fitted  \\
\hline
$\omega_1$& contact rate between carabao and cercariae per day& $0.24692$ ($0.30996$) & fitted \\
\hline
\hline
$\alpha_2$ & birth rate of human per day & $0.00278$  & \cite{birth_stats}\\
\hline
$\mu_2$ & natural death rate of human per day  & $3.914 \times 10^{-5}$  & \cite{birth_stats} \\
\hline
$\theta_2$ & fecal oviposition from human per day  & $250$ & \cite{feng_estimation_2004}  \\
\hline
$\gamma_2$ & death rate of human due to schistosomiasis per day  & $0.00898$ ($0.01093$) & fitted \\
\hline
$\beta_2$   & probability infection of human & $0.01591$ ($0.09510$) & fitted \\
\hline
$\omega_2$& contact rate between human and cercariae per day& $0.19972$ ($0.15995$) & fitted  \\
\hline
\hline
$\alpha_s$ & birth rate of snail per day & $0.06886$ & \cite{bogitsh_chapter_2019} \\
\hline
$\mu_s$ & natural death rate of snail per day  & $0.0035$ & \cite{bogitsh_chapter_2019} \\
\hline
$\kappa_s$& carrying capacity of snail per $m^2$ & $17300$ & $10\%$ of the lake Mainit surface \\
\hline
$\gamma_s$ & death rate of snail due to schistosomiasis per day  & $0.016$  & \cite{ishikawa_modeling_2006}\\
\hline
$\beta_s$   & probability infection of snail &  $10^{-4}$ & \cite{feng_estimation_2004}\\
\hline
$\omega_s$& contact rate between snail and cercariae  per day& $1.17617$ ($0.56018$) & fitted \\
\hline
$\nu_s$ & transformation rate of larvae inside snails  per day  & $0.20865$ ($0.25194$) &  fitted\\
\hline
\hline
$\mu_e$ & natural death rate of egg & $0.38527$ ($0.33031$) & fitted\\
\hline
$\mu_l$ & natural death rate of cercariae& $0.23407$ ($0.13348$) & fitted \\
\hline
\end{tabular}
\end{table}
\end{landscape}

\end{document}